\newtheorem{theorem}{Theorem}[section]
\newtheorem{definition}[theorem]{Definition}
\newtheorem{proposition}[theorem]{Proposition}
\newtheorem{conjecture}[theorem]{Conjecture}
\newenvironment{keyword}{\par\noindent{}\textbf{Keywords:}}{}
\def\un{\mathbf{1}}
\def\kk{\mathbb{K}}
\def\CC{\mathbb{C}}
\def\NN{\mathbb{N}}
\def\RR{\mathbb{R}}
\def\db{\mathbf{d}}
\def\hb{\mathbf{h}}
\def\lb{\mathbf{l}}
\def\xb{\mathbf{x}}
\def\xa{\underline{\mathbf{x}}}
\def\Proj{\mathbb{P}}
\def\PP{\Proj}
\def\Ac{\mathcal{A}}
\def\Tc{\mathcal{T}}
\def\AA{\mathcal{A}_{\Lambda}}
\def\<{\langle}
\def\>{\rangle}
\newcommand{\dual}[1]{{#1}^{*}}
\def\dAA{\dual{\mathcal{A}}_{\Lambda}}
\def\dRR{\dual{ R}}
\def\SV{\Xi}
\def\Hom{\operatorname{Hom}}
\def\rank{\operatorname{rank}}
\def\mat#1{\mathbb#1}
\def\form#1{\mathcal#1}
\def\set#1{\{ #1 \}}
\def\Set#1{\left\{ #1 \right\}}
\title{General Tensor Decomposition, Moment Matrices and Applications}
\author{A. Bernardi$^{*}$ \& J. Brachat$^{*}$ \& P. Comon$^{+}$ \& B. Mourrain$^{*}$}
\address{
$^{*}$ Alessandra Bernardi \& J\'er\^ome Brachat \& Bernard Mourrain:
GALAAD, INRIA M\'editerran\'ee \\
Sophia-Antipolis, France\\
}
\email{[FirstName.LastName]@inria.fr}
\address{$^{+}$ Pierre Comon:\\Laboratoire I3S, CNRS and Univ. of Nice,\\ 
Sophia-Antipolis, France\\
}
\email{pcomon@i3s.unice.fr}
\begin{document}

\maketitle

\begin{abstract}
The tensor decomposition addressed in this paper may
be seen as a generalisation of Singular Value Decomposition of
matrices. We consider general multilinear and multihomogeneous tensors.
We show how to reduce the problem to a truncated moment matrix problem and
give a new criterion for flat extension of Quasi-Hankel matrices. We connect
this criterion to the commutation characterisation of border bases.
A new algorithm is described. It applies for general multihomogeneous
tensors, extending the approach of J.J. Sylvester to 
binary forms. An example illustrates the algebraic operations involved 
in this approach and how the decomposition can be recovered from
eigenvector computation. 
\end{abstract}

\begin{keyword}
tensor; decomposition; multihomogeneous polynomial; rank; Hankel
operator; moment matrix; flat extension.
\end{keyword}
 
%\end{frontmatter}

\section{Introduction}

Tensors are objects that appear in various contexts and applications.  
Matrices are tensors of order two, and are better known than tensors.
But in many problems, higher order tensors are naturally used to
collect information which depend on more than two variables. Typically, these data could be observations of some experimentation or of a physical phenomenon that depends on several parameters. These observations are stored in a structure called tensor, whose dimensional parameters (or modes) depend on the problem.

The tensor decomposition problem consists of decomposing a tensor (e.g. the set of observations) into a minimal sum of so-called \emph{decomposable} tensors (i.e. tensors of rank $1$). Such a decomposition which is independent of the coordinate
system allows to extract geometric or invariant properties associated with
the observations. For this reason, the tensor decomposition problem has a
large impact in many applications. The first well
known case is encountered for matrices (i.e. tensors of order 2), and  is related to Singular Value Decomposition with
applications e.g. to Principal Component Analysis. %\cite{}. 
Its extension to higher order tensors appears in Electrical Engineering
\cite{SwamGS94:ieeesp}, in Signal processing \cite{ComoR06:SP},
\cite{CichA02}, in Antenna Array Processing \cite{DogaM95a:ieeesp}
\cite{ChevAFC05:ieeesp} or Telecommunications \cite{VandP96:ieeesp},
\cite{Chev99:SP}, \cite{SidiGB00:ieeesp}, \cite{FerrC00:ieeesp},
\cite{DelaC07:SP}, in Chemometrics \cite{Bro97:cils} or Psychometrics
\cite{KierK91:psy}, in Data Analysis \cite{Como92:elsevier},
\cite{Card98:procieee}, \cite{Dono01:ieeeit}, \cite{JianS04:ieeesp}, \cite{SmilBG04}, but also
in more theoretical domains such as Arithmetic complexity \cite{Krus77:laa}
\cite{BiniCRL79:ipl} \cite{Stra83:laa} \cite{Land08:bams}. 
Further numerous applications of tensor decompositions may be found in
\cite{CichA02} \cite{ComoJ10} \cite{SmilBG04}. 

From a mathematical point of view, the tensors that we will consider are
elements of $\Tc:=S^{\delta_{1}}(E_{1})\otimes \cdots \otimes
S^{\delta_{k}}(E_{k})$ where $\delta_{i} \in \NN$, $E_{i}$ are vector spaces
of dimension $n_{i}+1$ over a field $\kk$ (which is of characteristic $0$ and
algebraically closed), and $S^{\delta_{i}}(E_{i})$ is the
$\delta_{i}^{\mathrm{th}}$ symmetric power of $E_{i}$.  The set of tensors of
rank $1$ form a projective variety which is called the Veronese variety when $k=1$
or the Segre variety when $\delta_{i}=1, i=1,\ldots,k$.  We will call it
hereafter the Segre-Veronese variety of $\PP(\Tc)$ and denote it $\SV(\Tc)$. 
The set of tensors which are the linear combinations of $r$ elements of the Segre-Veronese
variety are those which admit a decomposition with at most $r$ terms of rank
$1$ (i.e. in  $\SV(\Tc)$). The closure of this set is called the $r$-secant variety and denoted
$\SV_{r}(\Tc)$.  More precise definitions of these varieties will be given in
Sec. \ref{sec:geometric}.

%%%%%%%%%%%%%%%%%%%%%%%%%%%%%%%%%%%%%%%%%%%%%%%%%%%%%%%%%%%%%%%%%%%%%%
%\ifx\@full\@true \relax \else 
Decomposing a tensor $T$ consists of finding the minimal $r$ such that this
tensor is a sum of $r$ tensors of rank $1$. This minimal $r$ is called the
rank of $T$. By definition, a tensor of rank $r$ is in the secant variety
$\SV_{r}(\Tc)$. Thus analysing the properties of these secant varieties 
and their characterisation helps determining tensor ranks and decompositions.

The case where $k=2$ and $\delta_{1}=\delta_{2}=1$ corresponds to the matrix
case, which is well known. The rank of a matrix seen as a tensor of order
$k=2$ is its usual rank.
The case where $k=1$ and $\delta_{1}=2$ corresponds to the case of quadratic forms
 and is also well understood. The rank of the symmetric tensor is the
usual rank of the associated symmetric matrix.
The case where $k=1$, $\delta_{1}\in\NN$ and $n_{1}=1$ corresponds to binary
forms, which has been analyzed by J.J. Sylvester in \cite{sylv-cr-1886}. A more complete
description in terms of secant varieties is given in \cite{LOarxiv10}.

On our knowledge if $k> 1$ and if at least one of the $\delta_i$'s is larger
than $1$, then there is no specific result in the literature on the defining
ideal of secant varieties of Segre-Veronese varieties 
$\SV(S^{\delta_{1}}(E_{1})\otimes \cdots  \otimes S^{\delta_{k}}(E_{k}))$
except for \cite{CGG} where the authors conjecture that when
$\SV_{r}(S^{\delta_{1}}(E_{1})\otimes  S^{\delta_{2}}(E_{2})))$ is a defective
hypersurface, then its defining equation is a determinantal equation.

In the case of the secant varieties of Veronese varieties (i.e. if $k=1$ and
$\delta_1>1$), the knowledge of their ideal is sparse. Beside the classical results (see one for all \cite{ia-book-1999}) we quote \cite{LOarxiv10first} as the most up-to-date paper on that subject. 
We also quote \cite{LOarxiv10} for a modern approach to equations of secant varieties in general using representation theory.

About the case of secant varieties of Segre varieties (i.e. $\delta_i=1$ for
$i=1, \ldots , k$) the only obvious case is the $2$ factors Segre. For
some of the non trivial  cases  in which equations of secant varieties
of Segre varieties are known we refer to \cite{LandM04},
\cite{LandW07}, \cite{AR08}, \cite{CGG}. 

%%%%%%%%%%%%%%%%%%%%%%%%%%%%%%%%%%%%%%%%%%%%%%%%%%%%%%%%%%%%%%%%%%%%%%
The first method to compute such a decomposition, besides the case of matrices
or quadratic forms which may go back to the Babylonians, is due to Sylvester
for binary forms \cite{sylv-cr-1886}. Using apolarity, kernels of
catalecticant matrices are computed degree by degree until a polynomial with
simple roots is found. See also \cite{cm-binary-arxiv-2001}, \cite{ia-book-1999}. An
extension of this approach for symmetric tensors has been analyzed in
\cite{ia-book-1999}, and yields a decomposition method in some cases (see
\cite{ia-book-1999}[p. 187]). Some decomposition methods are also available
for specific degrees and dimensions, e.g. using invariant theory
\cite{cm-sp-1996}. In \cite{BGI11}, there is a simplified version of 
Sylvester's algorithm, which uses the mathematical interpretation of the problem
in terms of secant varieties of rational normal curves. The same approach is
used in \cite{BGI11} to give algorithms for the decompositions of symmetric
tensors belonging to $\SV_2(S^{d}(E))$ and to $\SV_3(S^d(E))$. In \cite{BB10} a
complete rank stratification of $\SV_4(S^d(E))$ is given.

In \cite{BracCMT09:laa}, Sylvester's approach is revisited from an affine
point of view and a general decomposition method based on a flat extension
criterion is described. The main contribution of the current paper is to extend this method to more general
tensor spaces including classical multilinear tensors and multihomogeneous
tensors. In particular we give a new and more flexible criterion for the existence of a
decomposition of a given rank, which extends non trivially the result in \cite{ML08} and the
characterization used in \cite{BracCMT09:laa}. This criterion is a rank condition of an
associated Hankel operator. Moreover we use that criterion to write a new algorithm which checks, degree by degree, if the roots deduced from the kernel of the Hankel operator are simple. This allows to compute the rank of any given partially symmetric tensor. 

This paper is an extended version of \cite{BBCM:2011:issac}, with the
complete proofs and with detailed examples.

In Sec.~2, we recall the notations, the geometric point related to secants of
Segre and Veronese varieties, and the algebraic point of view based on moment matrices.
In Sec. 3, we describe the algorithm and the criterion used to solve the
truncated moment problem. In Sec. 4, an example of tensor decompositions from Antenna Array Processing
illustrates the approach.
\section{Duality, moment matrices and tensor decomposition}
\subsection{Notation and preliminaries} 

Let $\kk$ be an algebraically closed field (e.g. $\kk=\CC$ the field of
complex numbers). We assume that $\kk$ is of characteristic $0$.
For a vector space $E$, its associated projective space is denoted
$\Proj(E)$. For $\bvec{v}\in E-\{0\}$ its class in $\Proj(E)$ is denoted
$\overline{\bvec{v}}$.
Let $\Proj^n$ be the projective space of  $E:=\kk^{n+1}$.
For a subset $F = \set{f_1, \dots, f_m}$ of a vector-space (resp. ring) $R$,
we denote by $\<F\>$ (resp. $(F)$) the vector space (resp. ideal) generated
by $F$ in $R$. 

We consider hereafter the symmetric $\delta$-th power $S^{\delta}(E)$ where
$E$ is a vector space of basis $x_{0},\ldots,x_{n}$. An element of $S^{\delta}(E)$ is a
homogeneous polynomial of degree $\delta \in \NN$ in the variables
$\xb=(x_{0},\ldots,x_{n})$.
For $\xb_{1}=(x_{0,1},\ldots,x_{n_{1},1})$ $,\ldots,$
$\xb_{k}=(x_{0,k},\ldots,x_{n_{k},k})$,
 $S^{\delta_{1}}(E_{1})\otimes \cdots \otimes S^{\delta_{k}}(E_{k})$ (with $E_{i}=\<x_{0,i},\ldots,x_{n_{i},i}\>$)
is the vector space of polynomials multihomogeneous of degree $\delta_{i}$ in the 
variables $\xb_{i}$.

Hereafter, we will consider the deshomogeneisation of elements in
$S^{\delta_{1}}(E_{1})\otimes \cdots \otimes S^{\delta_{k}}(E_{k})$, obtained by
setting $x_{0,i}=1$ for $i=1,\ldots,k$. We denote by $R_{\delta_{1},\ldots,\delta_{k}}$
this space, where $R=\kk[\xa_{1},\ldots, \xa_{k}]$ is the space of
polynomials in the variables $\xa_{1}=(x_{1,1},\ldots,x_{n_{1},1}),\ldots,
\xa_{k}=(x_{1,k},\ldots,x_{n_{k},k})$. 

For $\alpha_{i} = (\alpha_{1,i}, \ldots, \alpha_{n_{i},i}) \in \NN^{n_{i}}$ ($i=1,\ldots,k$),  
let $\xa_{i}^{\alpha_{i}}=\prod_{j=1}^{n_{i}}x_{j,i}^{\alpha_{j,i}}$,
$|\bvec{\alpha_{i}}| = \sum_{j=1}^{n_{i}}{\alpha_{j,i}}$, 
and $\xa^{\alpha}=\prod_{j=1}^{n_{i}}\xb_{i}^{\alpha_{i}}$.

An element $f$ of $R_{\delta}=R_{\delta_{1},\ldots,\delta_{k}}$ is represented as
$$ 
f
% =\sum_{\bvec{\alpha}=(\bvec{\alpha}_{1},\ldots,\bvec{\alpha}_{k});
%   |\bvec{\alpha}_{i}|\le \delta_{i} }
% f_{\bvec{\alpha}} \, \xa^{{\alpha}} 
= \sum_{\bvec{\alpha}=(\bvec{\alpha}_{1},\ldots,\bvec{\alpha}_{k});
  |\bvec{\alpha}_{i}|\le \delta_{i} }
f_{\bvec{\alpha}} \, \xa_{1}^{\alpha_{1}}\cdots \xa_{k}^{\alpha_{k}}.
$$
The dimension of $R_{\delta}:=R_{\delta_{1},\ldots,\delta_{k}}$ is $n_{\delta_{1},\ldots,
  \delta_{k};n_{1},\ldots, n_{k}}$ $=$ $\prod _{i=1}^{k} {n_{i}+\delta_{i} \choose \delta_{i}}$.
For $\delta\in \NN, \alpha\in \NN^{n}$ with $|\alpha|\le \delta$, let 
${\delta\choose \alpha}={\delta! \over \alpha_{1}! \cdots
    \alpha_{n}! (\delta-|\alpha|)!}$.
We define the apolar inner product on $R_{\delta_{1},\ldots,\delta_{k}}$ by
$%\begin{displaymath}
  \<f|g\> = \sum_{|\alpha_{i}|\le \delta_{i}} f_{\alpha}\, g_{\alpha}
          {\delta_{1} \choose \alpha_{1}} \cdots {\delta_{k} \choose \alpha_{k}}.
$
%\end{displaymath}

The dual space of a $\kk$-vector space $E$
is denoted $\dual{E}=\Hom_{\kk}(E,\kk)$. It is the set of $\kk$-linear forms
from $E$ to $\kk$. 
A basis of the dual space $\dual{R}_\delta$,
is given by the set of linear forms that compute the
coefficients of a polynomial in the monomial basis $(\xa^{{\alpha}})_{\alpha
  \in \NN^{n_{1}}\times \cdots \times \NN^{n_{k}}; |\alpha_{i}|\leq \delta_{i}}$.  We
denote it by $(\db^{{\alpha}})_{\alpha\in \NN^{n_{1}}\times \cdots \times \NN^{n_{k}}; |\alpha_{i}|\leq \delta_{i}}$. 
We identify $\dRR$ with the (vector) space of formal power
series $\kk[[ \db ]]$ = $\kk[[ \db_{1},\ldots, \db_{k} ]]=
\kk[[d_{1,1}, \dots, d_{n_{1},1}$, $\ldots$, $d_{1,k}, \dots, d_{n_{k},k}]]$. 
Any element $\Lambda \in \dual{R}$ can be decomposed as 
$$ 
\Lambda = \sum_{\alpha\in \NN^{n_{1}}\times \cdots \times \NN^{n_{k}}} \,\Lambda (\xb^{\alpha}) \,\db^{\alpha}.
$$

Typical elements of $\dual{R}$ are the linear forms that correspond to the
evaluation at a point  $\zeta=(\zeta_{1},\ldots, \zeta_{i}) \in \kk^{n_{1}}\times \cdots \times \kk^{n_{k}}$:
\begin{displaymath}
  \begin{array}{lcl}
    \un_{\zeta }  & : & R \rightarrow \kk \\
    &  &  p \mapsto p (\zeta) \\
  \end{array}
\end{displaymath}
The decomposition of $\un_{\zeta}$ in the basis $\set{
  \bvec{d}^{\bvec{\alpha}}}_{\bvec{\alpha}\in \NN^{n_{1}}\times \cdots \times \NN^{n_{k}}}$
is 
$$
\un_{\zeta} 
= \sum_{\alpha \in \NN^{n_{1}}\times \cdots \times \NN^{n_{k}}} \zeta^{\alpha}\, \db^{\alpha}
= \sum_{\alpha \in \NN^{n_{1}}\times \cdots \times \NN^{n_{k}}} \prod_{i=1}^{k}\zeta_{i}^{ \alpha_{i}}\, \db_{i}^{\alpha_{i}}.
$$
We recall that the dual space $\dual{R}$ has a natural structure of $R$-module \cite{EM08} which is
defined as follows: for all $p \in R$, and for all $\Lambda \in \dual{R}$
consider the linear operator
\begin{displaymath}
  \begin{array}{lcl}
    p \star \Lambda & : & R \rightarrow \kk \\
    && q \mapsto \Lambda (p q).
  \end{array}
\end{displaymath}
In particular, we have $x_{i,j} \star\db_{1}^{\alpha_{1}}\cdots \db_{j}^{\alpha_{j}} \cdots
  \db_{k}^{\alpha_{k}} = $\\
{\small $\db_{1}^{\alpha_{1}}\cdots \db_{j-1}^{\alpha_{j-1}} \, d_{1,j}^{\alpha_{1,j}} \cdots d_{i-1,j}^{\alpha_{i-1,j}}
       d_{i,j}^{\alpha_{i,j} - 1} d_{i + 1,j}^{\alpha_{i+1,j}} \cdots
       d_{n_{j},j}^{\alpha_{n_{j},j}} \, \db_{j+1}^{\alpha_{j+1}}\cdots
       \db_{k}^{\alpha_{k}}$} if $\alpha_{i,j} > 0$ and $0$ otherwise.

\subsection{Tensor decomposition}
\label{sec:2}\label{sec:prob-formulation}\label{sec:def-poly-decomp}
In this section, we present different formulations of the tensor
decomposition problem, that we consider in this paper.

We will consider hereafter a partially symmetric tensor $T$ of 
$S^{\delta_{1}}(E_{1})\otimes \cdots \otimes S^{\delta_{k}}(E_{k})$ where
$E_{i}=\<x_{0,i},\ldots,x_{n_{i},i}\>$. It can be represented by a partially symmetric array of
coefficients 
\begin{equation}\label{array}
[T] = ( T_{\alpha_{1},\ldots,\alpha_{k}})_{\alpha_{i}\in \NN^{n_{i}+1}; |{\alpha}_{i}|=\delta_{i} }.
\end{equation}
For $\alpha_{i}\in \NN^{n_{i}}$ with $|\alpha_{i}|\le \delta_{i}$, we denote
$\overline{\alpha}_{i}=(\delta_{i}-|\alpha_{i}|,\alpha_{1,i},\ldots$, $\alpha_{n_{i},i})$
and, with an abuse of notation, we identify $T_{\alpha_{1},\ldots,\alpha_{k}} := T_{\overline{\alpha_{1}},\ldots,\overline{\alpha_{k}}}$.

Such a tensor is naturally associated with a (multihomogeneous) polynomial 
in the variables $\xb_{1}=(x_{0,1},\ldots, x_{n_{1},1})$, $\ldots$,
$\xb_{k}=(x_{0,k},\ldots, x_{n_{k},k})$ 
$$ 
T(\xb)=\sum_{
{{\alpha}=({\alpha}_{1},\ldots,{\alpha}_{k}) \in \NN^{n_{1}}\times \cdots \times \NN^{n_{k}};}
\atop {|{\alpha}_{i}|\leq\delta_{i}}
}
T_{{\alpha}} \, \xb_{1}^{{\overline{\alpha}_{1}}}\cdots \xb_{k}^{\overline{\alpha}_{k}}.
$$
or to an element $\underline{T}(\xa) \in R_{\delta_{1},\ldots,\delta_{k}}$
obtained by substituting $x_{0,i}$ by $1$ in $T(\xb)$ (for $i=1,\ldots, k$):
$$
\underline{T}(\xa)=\sum_{
\alpha \in \NN^{n_{1}}\times \cdots \times \NN^{n_{k}};
\atop
  |{\alpha}_{i}|\le \delta_{i}}
T_{{\alpha}} \, \xa_{1}^{\alpha_{1}}\cdots \xa_{k}^{\alpha_{k}}.
$$
An element of $R^{*}=\kk[[\db]]$ can also be associated naturally with $T$:
$$
{T^{*}}(\db)=\sum_{\alpha \in \NN^{n_{1}}\times \cdots \times \NN^{n_{k}};
\atop |{\alpha}_{i}|\le \delta_{i}}
{\delta_{1} \choose \alpha_{1}}^{-1} \cdots {\delta_{k} \choose \alpha_{k}}^{-1}
T_{{\alpha}} \, \db_{1}^{\alpha_{1}}\cdots \db_{k}^{\alpha_{k}}.
$$
so that for all $T'\in R_{\delta_{1},\ldots,\delta_{k}}$,
$$ 
\<T(\xa)|T'(\xa)\> = T^{*}(\db) (T'(\xa)).
$$

The decomposition of  tensor $T$ can be stated as follows:
\par\textbf{Tensor decomposition problem.}
{\em Given $T(\xb) \in S^{\delta_{1}}(E_{1})$ $\otimes \cdots \otimes
S^{\delta_{k}}(E_{k})$, find a decomposition of $T(\xb)$ as a sum of products
of powers of linear forms in $\xb_{j}$:
\begin{equation}\label{eq:poly-decomp}
T(\xb) = \sum_{i=1}^{r}\, \gamma_{i}\, \lb_{1,i}(\xb_{1})^{\delta_{1}} \cdots \lb_{k,i}(\xb_{k})^{\delta_{k}} 
\end{equation}
where $\gamma_{i}\neq 0$, $\lb_{j,i}(\xb_{j})= l_{0,j,i} x_{0,j} + l_{1,j,i} x_{1,j} + \cdots + l_{ n_{j},j,i} x_{j,n_{j}}$
and $r$ is the smallest possible integer for such a decomposition.}
\begin{definition}\label{rank}
The minimal number of terms $r$ in a decomposition of the form \eqref{eq:poly-decomp} is called the {\em rank} of $T$.
\end{definition}
 
%\begin{definition}
%\vspace{-0.18cm}
We say that $T(\xb)$ has an {\em affine decomposition} if there exists a minimal
decomposition  of $T(\xb)$ of the form \eqref{eq:poly-decomp} where $r$ is
the rank of $T$ and such that $l_{0,j,i}\neq 0$ for $i=1,\ldots,r$. 
%\end{definition}
Notice that by a generic change of coordinates in $E_{i}$, we may assume
that all  $l_{0,j,i}\neq 0$ and thus that $T$ has an affine decomposition.
Suppose that $T(\xb)$ has an affine decomposition. Then by scaling
$\lb_{j,i}(\xb_{j})$ and multiplying $\gamma_{i}$ by the inverse of the
$\delta_{j}^{th}$ power of this scaling factor, we may assume that
$l_{0,j,i}=1$.  Thus, the polynomial
$$
\underline{T}(\xa) = \sum_{i=1}^{r}\, \gamma_{i}\, \sum_{|\alpha_{i}|\le \delta_{i}} \,
{\delta_{1} \choose \alpha_{1}} \cdots {\delta_{k} \choose \alpha_{k}} 
\, \zeta_{1,i}^{\alpha_{1}} \cdots \zeta_{k,i}^{\alpha_{k}}\, \xa_{1}^{\alpha_{1}} \cdots \xa_{k}^{\alpha_{k}}
$$
with $T_{\alpha_{1},\ldots,\alpha_{k}}= \sum_{i=1}^{r}\, \gamma_{i}\, \sum_{|\alpha_{i}|\le \delta_{i}}
{\delta_{1} \choose \alpha_{1}} \cdots {\delta_{k} \choose \alpha_{k}} 
\, \zeta_{1,i}^{\alpha_{1}} \cdots \zeta_{k,i}^{\alpha_{k}}$. Equivalently, we have
$$
T^{*}(\db) = \sum_{i=1}^{r}\, \gamma_{i}\, \sum_{|\alpha_{i}|\le \delta_{i}}
\, \zeta_{1,i}^{\alpha_{1}} \cdots \zeta_{k,i}^{\alpha_{k}}\, \db_{1}^{\alpha_{1}} \cdots \db_{k}^{\alpha_{k}}
$$
so that $T^{*}(\db)$ coincides on $R_{\delta_{1},\ldots,\delta_{k}}$
with the linear form 
$$ 
\sum_{i=1}^{r}\, \gamma_{i} \, \un_{\zeta_{1,i},\ldots, \zeta_{k,i}} = \sum_{i=1}^{r}\, \gamma_{i} \, \un_{\zeta_{i}}
$$
with $\zeta_{i} := (\zeta_{1,i},\ldots, \zeta_{k,i}) \in \kk^{n_{1}}\times
\cdots \kk^{n_{k}}$.

The decomposition problem  can then be restated as follows:
\par\textbf{Interpolation problem.} {\em Given $T^{*} \in R_{\delta_{1},\ldots,\delta_{k}}^{*}$ which admits an
   affine decomposition, find the minimal number
  of non-zero vectors
  $\zeta_{1}, \ldots, \zeta_{r}\in \kk^{n_{1}} \times \cdots \times \kk^{n_{k}}$ and non-zero scalars $\gamma_{1},
  \ldots, \gamma_{r} \in \kk-\{0\}$ such that 
  \begin{equation}\label{interpol}
T^{*} = \sum_{i=1}^{r} \gamma_{i} \, \un_{\zeta_{i}} 
  \end{equation}
on $R_{\delta_{1},\ldots,\delta_{k}}$.}

If such a decomposition exists, we say that $\Lambda = \sum_{i=1}^{r}
\gamma_{i} \, \un_{\zeta_{i}}\in R^{*}$ extends $T^{*} \in
R_{\delta_{1},\ldots, \delta_{k}}^{*}$.

%%%%%%%%%%%%%%%%%%%%%%%%%%%%%%
%ABbegin
%%%%%%%%%%%%%%%%%%%%%%%%%%%%%%

\subsection{Decomposable tensors}\label{sec:geometric}

In this section, we analyze the set of  tensors of
rank $1$, also called \emph{decomposable} tensors \cite{AboOP09:tams}. They naturally form projective varieties, which we are going to
describe using the language of projective geometry.
%Let $E_1, \ldots , E_k$ be vector spaces of dimensions $n_1+1, \ldots , n_k+1$ respectively.

We begin by defining two auxiliary but very classical varieties, namely Segre and Veronese varieties.
\begin{definition} The image of the following map
$$\begin{array}{rcl}
s_k: \Proj(E_1) \times \cdots \times \Proj (E_k) &\rightarrow & \Proj(E_1 \otimes \cdots \otimes E_k) \\
 (\overline{\bvec{v}_1}, \ldots ,\overline{\bvec{v}_k}) &\mapsto &\overline{\bvec{v}_1 \otimes \cdots \otimes \bvec{v}_k}
\end{array}$$
is the so called Segre variety of $k$ factors. We denote it by $\SV(E_{1}\otimes \cdots \otimes E_{k})$.
 \end{definition}

%In (\ref{array}) we associated to an $T\in S^{\delta_1}(E_1)\otimes \cdots \otimes S^{\delta_k}(E_k)$, a partially symmetric array $[T]$ made with its coefficients. Obviously if $T$ is a tensor $T\in E_1\otimes \cdots \otimes E_k$ that it is not necessarly partially symmetric, we can associate to $T$ an array $[T]$ whose entries are the coefficients of the multilinear map $T\in E_1\otimes \cdots \otimes E_k$ in a fixed basis. 
%\\
%We can also extend Definition \ref{rank} of the rank of a partially symmetric tensor to the notion of the rank of a generic tensor $T\in E_1\otimes \cdots \otimes E_k$ as the minimum integer $r$ such that $T=\sum_{i=1}^r \bvec{v_{i,1}}\otimes \cdots \otimes \bvec{v_{i,k}}$ with $\bvec{v_{i,j}}\in E_j$ for $j=1, \ldots , k$.

From Definition \ref{rank} of the rank of a tensor and from the Interpolation Problem point of view (\ref{interpol}) we see that a Segre variety parametrizes projective classes of rank 1 tensors $T=\bvec{v_1}\otimes \cdots \otimes \bvec{v_k}\in E_1 \otimes \cdots \otimes E_k$ for certain $\bvec{v_i} \in E_i$, $i=1, \ldots ,k$.

\begin{definition}\label{flat}  Let $(J_{1},J_{2})$ be a partition of the set $\{1, \ldots , k\}$. If $J_{1}=\{h_{1}, \ldots , h_{s}\}$ and $J_{2}=\{1, \ldots ,k\} \backslash J_{1}=\{h'_{1}, \ldots , h'_{k-s}\}$, the $(J_{1},J_{2})$-Flattening of $E_{1}\otimes \cdots \otimes E_{k}$  is the following:
$$E_{J_{1}}\otimes E_{J_{2}}=(E_{h_{1}}\otimes \cdots \otimes E_{h_{s}})\otimes (E_{h'_{1}}\otimes \cdots \otimes E_{h'_{k-s}}).$$
\end{definition}

%\begin{definition} 
Let $E_{J_{1}}\otimes E_{J_{2}}$ be any flattening of $E_{1}\otimes \cdots
\otimes E_{k}$ as in Definition \ref{flat} and let
$f_{J_{1},J_{2}}:\Proj(E_{1}\otimes \cdots \otimes E_{k})\rightarrow
\Proj(E_{J_{1}}\otimes E_{J_{2}})$ be the obvious isomorphism. Let $[T]$ be
an array associated with a tensor $T\in E_{1}\otimes \cdots \otimes E_{k}$; let
$\overline{T'}=f_{J_{1},J_{2}}(\overline{\bvec{T}})\in \Proj(E_{J_{1}}\otimes
E_{J_{2}})$ and let $[A_{J_{1},J_{2}}]$ be the matrix associated with
$T'$. Then the $d$-minors of the matrix $[A_{J_{1},J_{2}}]$ are said to be
$d$-minors of $[T]$. 
%\end{definition}

%\vspace{-0.28cm}\begin{definition}
An array $[A] =(x_{i_{1}, \ldots , i_{k}})_{0\leq i_{j}\leq n_{j}\, , \, j=1,
  \ldots , k}$ is said to be a generic array of indeterminates  of
$R=\kk[\xa_{1},\ldots$, $\xa_{k}]$  if the entries of $[A]$ are the independent
variables of $R$. 
%\end{definition}

%\vspace{-0.28cm}\begin{remark}
It is a classical result due to R. Grone (see \cite{Gr77}) that a set of equations for a  Segre variety is given by all the $2$-minors of a generic array. 
In \cite{TH02} it is proved that, if $[A]$ is a generic array in $R$ of size $(n_1+1)\times \cdots \times (n_k+1)$ and $I_{d}([A])$ is the ideal generated by the $d$-minors of $[A]$ , then $I_{2}([A])$ is a prime ideal, therefore:
$$I(\SV(E_{1}\otimes \cdots \otimes E_{k}))=I_{2}([A]).$$
%\end{remark}

We introduce now the Veronese variety. Classically it is defined to be the $d$-tuple embedding of $\Proj^n$ into $\Proj^{{n+d \choose d}-1}$ via the linear system associated with the sheaf $\mathcal{O}(d)$ with $d>0$. We give here an equivalent definition.

Let $E$ be an $n+1$ dimensional vector space. With the notation $S^d(E)$ we mean the vector subspace of $E^{\otimes d}$ of symmetric tensors.
 
\vspace{-0.18cm}\begin{definition}\label{veronesedef}
 The image of the following map 
$$\begin{array}{rcl}
 \nu_d:\Proj (E)&\rightarrow& \Proj (S^d(E))\\
\overline{\bvec{v}}&\mapsto & \overline{\bvec{v^{\otimes d}} }
\end{array}
$$
is the so called Veronese variety. We indicate it with $\SV(S^d(E))$.
\end{definition}

With this definition it is easy to see that the Veronese variety parametrizes symmetric rank 1 tensors.

Observe that if we take the vector space $E$ to be a vector space of linear forms $\langle x_0 , \ldots , x_n \rangle$  then the image of the map $\nu_d$ above parametrizes homogeneous polynomials that can be written as $d$-th powers of linear forms.

The Veronese variety $\SV(S^d(E))\subset \Proj (S^d(E))$ can be also viewed as 
$\SV(S^d(E))=\SV(E^{\otimes d})\cap \Proj(S^d(E))$.

%\vspace{-0.18cm}\begin{remark}
Let $[A]=(x_{i_{1}, \ldots  , i_{d}})_{0\leq i_{j}\leq n, \, j=1, \ldots , d} $ be a generic symmetric array. It is a known result   that:
\begin{equation}\label{veronese}I(\SV(S^d(E)))=I_{2}([A]).\end{equation}
See \cite{Wa} for the set theoretical point of view. In \cite{Pu} the author
proved that $I(\SV(S^d(E)))$ is generated by the $2$-minors of a particular
catalecticant matrix (for a definition of ``Catalecticant matrices'' see e.g.
either \cite{Pu} or \cite{Ge99}). A. Parolin, in his PhD thesis
(\cite{PaPhd}), proved that the ideal generated by the $2$-minors of that
catalecticant matrix is actually $I_{2}([A])$. 
%\end{remark}

We are now ready to describe the geometric object that parametrizes partially symmetric tensors $T\in S^{\delta_1} (E_1)\otimes \cdots \otimes S^{\delta_k}(E_k)$. Let us start with the rank 1 partially symmetric tensors.

\begin{definition}\label{segrevero} Let $E_{1}, \ldots , E_{k}$ be vector spaces
of dimensions $n_{1}+1, \ldots , n_{k}+1$ respectively. 
The Segre-Veronese variety $\SV(S^{\delta_{1}}(E_{1})\otimes \cdots \otimes S^{\delta_k}(E_{k}))$ is the embedding of $\Proj(E_{1})\otimes \cdots \otimes \Proj(E_{k})$ into $\Proj^{N-1}\simeq \Proj(S^{\delta_{1}}(E_{1})\otimes \cdots \otimes S^{\delta_k}(E_{k}))$, where $N=\left(\Pi_{i=1}^{k}{n_{i}+\delta_{i}\choose d_{i}}\right)$, given by sections of the sheaf ${\mathcal{O}}(\delta_{1}, \ldots , \delta_{k})$. 
\\
I.e. $\SV(S^{\delta_{1}}(E_{1})\otimes \cdots \otimes S^{\delta_k}(E_{k}))$ is the image of the composition of the following two maps:
$$
\Proj(E_{1})\times \cdots \times \Proj(E_{k}) \stackrel{\nu_{\delta_{1}}\times \cdots \times \nu_{\delta_{k}}}{\longrightarrow} \Proj^{{n_{1}+\delta_{1}\choose \delta_{1}}-1}\times \cdots \times \Proj^{{n_{k}+\delta_{k}\choose \delta_{k}}-1}$$
and $ \Proj^{{n_{1}+\delta_{1}\choose \delta_{1}}-1}\times \cdots \times \Proj^{{n_{k}+\delta_{k}\choose \delta_{t}}-1}\stackrel{s}{\longrightarrow} \Proj^{N-1}$,
where each $\nu_{\delta_i}$ is a Veronese embedding of $\mathbb{P}(E_i)$ as in Definition \ref{veronesedef}, then $Im (\nu_{\delta_{1}}\times \cdots \times \nu_{\delta_{k}})=\SV(S^{\delta_{1}}(E_1))\times \cdots \times \SV(S^{\delta_k}(E_k))$ and $Im (s)$ is the Segre variety of $k$ factors. Therefore the Segre-Veronese variety is the Segre re-embedding of the product of $k$ Veronese varieties.
\end{definition}

%\textbf{Example}: 
If $(\delta_{1}, \ldots ,\delta_{k})=(1, \ldots , 1)$ then the corresponding
Segre-Veronese variety is nothing else than the classical Segre variety of $\Proj(E_{1}\otimes \cdots \otimes E_{k})$.

%\textbf{Example}: 
If $k=1$ then the corresponding Segre-Veronese variety is nothing else than
the classical Veronese variety of $\Proj(S^{\delta_1}(E_1))$.

Observe that $\SV(S^{\delta_{1}}(E_{1})\otimes \cdots \otimes S^{\delta_k}(E_{k}))$ can be viewed as the intersection with the Segre variety $\SV(E_1^{\otimes \delta_1}\otimes \cdots \otimes E_k^{\otimes \delta_k})$ that parametrizes rank one tensors  and the projective subspace $\Proj(S^{\delta_1}(E_1) \otimes \cdots \otimes S^{\delta_k}(E_k))\subset \Proj (E_1^{\otimes \delta_1}\otimes \cdots \otimes E_k^{\otimes \delta_k})$ that parametrizes partially symmetric tensors:
$\SV(S^{\delta_1}(E_{1})\otimes \cdots \otimes
S^{\delta_k}(E_{k}))=\SV(E_1^{\otimes \delta_1}\otimes \cdots \otimes
E_k^{\otimes \delta_k})$ $\cap$ $\Proj(S^{\delta_1}(E_1) \otimes \cdots \otimes S^{\delta_k}(E_k))$.

%\vspace{-0.18cm}\begin{remark}
In \cite{Be08} it is proved that  if $[A]$ is a generic array of indeterminates associated with the  multihomogeneous polynomial ring  $S^{\delta_1}(E_1) \otimes \cdots \otimes S^{\delta_k}(E_k)$  
%of $\Proj (S^{\delta_1}E_{1}\otimes \cdots \otimes S^{\delta_k}E_{t})\subset \Proj {}(E_{1}^{\otimes \delta_{1} }\otimes \cdots \otimes E_{k}^{\otimes \delta_{k}})$ 
(i.e. it is a generic partially symmetric array),
the ideal of the Segre-Veronese variety $\SV(S^{\delta_{1}}(E_{1})\otimes \cdots \otimes S^{\delta_k}(E_{k}))$ is 
$$I(\SV(S^{\delta_{1}}(E_{1})\otimes \cdots \otimes S^{\delta_k}(E_{k})))=I_{2}([A])$$
with $\delta_{i}>0$ for $i=1, \ldots, k$.
%\end{remark}

Now if we consider the vector spaces of linear forms  $E_i\simeq S^1(E_i)$ for $i=1, \ldots ,k$, we get that the Segre-Veronese variety $\SV(S^{\delta_{1}}(E_{1})\otimes \cdots \otimes S^{\delta_k}(E_{k}))$ parametrizes multihomogenoeus polynomials $F\in S^{\delta_1}(E_1) \otimes \cdots \otimes S^{\delta_k}(E_k)$ of the type $F=\lb_{1}^{\delta_{1}} \cdots \lb_{k}^{\delta_{k}} $
where $\lb_{i}$ are linear forms in $S^1(E_i)$ for $i=1, \ldots , k$.
\\
\indent From this observation we understand that the tensor decomposition problem of finding a minimal decomposition of type (\ref{eq:poly-decomp}) for an element $T \in S^{\delta_{1}}(E_{1})\otimes \cdots \otimes S^{\delta_{k}}(E_{k})$ is equivalent to finding the  minimum number of elements belonging to the Segre-Veronese variety $\SV(S^{\delta_{1}}(E_{1})\otimes \cdots \otimes S^{\delta_k}(E_{k}))$ whose span contains $\overline{T}\in\Proj ( S^{\delta_{1}}(E_{1})\otimes \cdots \otimes S^{\delta_{k}}(E_{k}))$.

The natural geometric objects that are associated with this kind of problems
are the higher secant varieties of the  Segre-Veronese varieties that we are
going to define. 

\begin{definition}
 Let $X\subset \Proj^N$ be any projective variety and define 
$$ 
X^0_s:= \bigcup_{\overline{\bvec{P}}_{1}, \ldots
  ,\overline{\bvec{P}}_{s}\in X}\langle \overline{\bvec{P}}_{1} , \ldots ,
\overline{\bvec{P}}_{s} \rangle.
$$
The $s$-th secant variety $X_s\subset \Proj^N$ of $X$ is the Zariski closure of $X^0_s$.
\end{definition}

Observe that the generic element of $X_s$ is a point $\overline{\bvec{P}}\in
\Proj^N$ that can be written as a linear combination of $s$ points of $X$, in
fact a generic element of $X_s$ is an element of $X^0_s$. Therefore if $X$ is
the Segre-Veronese variety, then the generic element of
$\SV_s(S^{\delta_{1}}(E_{1})\otimes \cdots \otimes S^{\delta_k}(E_{k}))$ is
the projective class of a partially symmetric tensor $T\in
S^{\delta_1}(E_1)\otimes \cdots \otimes S^{\delta_k}(E_k)$ that can be
written as a linear combination of $s$ linearly independent partially
symmetric tensors of rank 1. Unfortunately not all the elements of
$\SV_s(S^{\delta_{1}}(E_{1})\otimes \cdots \otimes S^{\delta_k}(E_{k}))$ are
of this form. In fact if $\overline{T}\in \SV_s(S^{\delta_{1}}(E_{1})\otimes
\cdots \otimes S^{\delta_k}(E_{k}))\setminus
\SV_s^0(S^{\delta_{1}}(E_{1})\otimes \cdots \otimes S^{\delta_k}(E_{k}))$
then the rank of $T$ is strictly bigger than $s$.

\begin{definition}
 The minimum integer $s$ such that $\overline{T} \in \Proj(S^{\delta_{1}}(E_{1})\otimes \cdots \otimes
S^{\delta_{k}}(E_{k}))$ belongs to  $\SV_s(S^{\delta_{1}}(E_{1})\otimes \cdots \otimes S^{\delta_k}(E_{k}))$ is called the  border rank of $T$.
\end{definition}

In order to find the border rank of a tensor $T\in
S^{\delta_{1}}(E_{1})\otimes \cdots \otimes S^{\delta_{k}}(E_{k})$ we should
need a set of equations for $\SV_s(S^{\delta_{1}}(E_{1})$ $\otimes \cdots
\otimes S^{\delta_k}(E_{k}))$ for $s>1$. The knowledge of the generators of
the ideals of secant varieties of homogeneous varieties is a very deep
problem that is solved only in very particular cases (see eg.
% \cite{Strass88}, 
\cite{Ott07}, \cite{LandW07}, \cite{LandM08}, \cite{LOarxiv10first},
\cite{BGLarxiv10}, \cite{LOarxiv10}).  

From a computational point of view, there is a very direct and well known way of
getting the equations for the secant variety, which consists of introducing
parameters or unknowns for the coefficients of $\lb_{i,j}$ and $\gamma_{i}$ in
\eqref{eq:poly-decomp}, to expand the polynomial and identify its coefficients with the
coefficients of $T$. Eliminating the coefficients of $\lb_{i,j}$ and
$\gamma_{i}$ yields the equations of the secant variety.

Unfortunately this procedure is far from being computationally practical,
because we have to deal with high degree polynomials in many variables, with
a lot of symmetries. This is why we need to introduce moment matrices and to
use a different kind of elimination.

%%%%%%%%%%%%%%%%%%%%%%%%%%%%%%
%ABend
%%%%%%%%%%%%%%%%%%%%%%%%%%%%%%

% In the following, we are going to see how we can solve this interpolation
% problem, using moment matrices.

\subsection{Moment matrices}
\label{sec:4}\label{sec:moments}

In this section, we recall the algebraic tools and the properties we  need to describe and
analyze our algorithm. Refer e.g. to \cite{BracCMT09:laa}, \cite{EM08}, \cite{mp-jcomplexity-2000} for more details.
 
\label{sec:hankel-op}
\label{sec:quadratic-form}

For any $\Lambda \in \dual{R}$, 
define the bilinear form $Q_{\Lambda}$, such that 
%\begin{displaymath}
%  \begin{array}{lcl}
%    Q_{\Lambda} & : & R \times R \rightarrow \kk  \\
%    &  & (a, b) \mapsto \Lambda (a b).
%  \end{array}
%\end{displaymath}
$\forall a,b \in R$,  $Q_{\Lambda}(a, b)=\Lambda (a b)$.
The matrix of $Q_{\Lambda}$ in the monomial basis, 
%$\set{ \bvec{x}^{\alpha}}$,
of $R$ is 
$\mat{Q}_{\Lambda} = (\Lambda (\xb^{\alpha + \beta}))_{\alpha, \beta}$,
where $\alpha, \beta \in \NN^n$.
Similarly, for any $\Lambda \in \dual{R}$, we define the Hankel operator $H_{\Lambda}$
from $R$ to $\dual{R}$ as 
\begin{displaymath}
  \begin{array}{lcl}
    H_{\Lambda} & : & R \rightarrow \dRR  \\
    && p \mapsto p \star \Lambda.
  \end{array}
\end{displaymath}
The matrix of the linear operator $ H_{\Lambda}$ 
in the monomial basis, % $\set{ \bvec{x}^{\alpha}}$, 
and in the dual basis, $\set{ \bvec{d}^{\alpha}}$,
is $\mat{H}_{\Lambda} = ( \Lambda( \xb^{\alpha + \beta}))_{\alpha, \beta}$,
where $\alpha, \beta \in \NN^n$.
The following relates the Hankel operators with the bilinear forms.
For all $a, b \in R$, thanks to the $R$-module structure, it holds 
\begin{displaymath}
  Q_{\Lambda} (a, b) = \Lambda( a b) = a  \star \Lambda (b) 
%= b \star \Lambda (a) 
= H_{\Lambda} (a) (b) 
%= H_{\Lambda} (b) (a)
.
\end{displaymath}
In what follows, we will identify $H_{\Lambda}$ and $Q_{\Lambda}$.
\begin{definition} 
  Given $B=\{b_{1},\ldots,b_{r}\}, B' =\{b'_{1},\ldots$, $b'_{r'}\}\subset R$, we define
  $$ 
  H^{B,B'}_{\Lambda} : \<B\> \rightarrow \dual{ \<B'\>},
  $$
  as the restriction of $H_{\Lambda}$ to the vector space $\<B\>$ and
inclusion of $R^{*}$ in $\dual{ \<B'\>}$. Let $\mat{H}^{B,B'}_{\Lambda} = 
  (\Lambda(b_{i}\, b'_{j}))_{1 \le i\le r, 1\le j\le r'}$. If  $B'=B$, we
also use the notation $H^{B}_{\Lambda}$ and $\mat{H}^{B}_{\Lambda}$.
\end{definition}
If $B, B'$ are linearly independent, then
$\mat{H}^{B,B'}_{\Lambda}$ is the matrix of
$H^{B,B'}_{\Lambda}$ in this basis $\{b_{1},\ldots,b_{r}\}$ of $\<B\>$ and the
dual basis of $B'$ in $\dual{ \<B'\>}$.
The {\em catalecticant} matrices of \cite{ia-book-1999} correspond to the case where
$B$ and $B'$ are respectively the set of monomials of degree $\le k$ and $\le
d-k$ ($k=0,\ldots,d$).

From the definition of the Hankel operators, we can deduce that a polynomial $p \in R$
belongs to the kernel of $\mat{H}_{\Lambda}$ if and only if
$p \star \Lambda = 0$, which in turn holds if and only if 
for all $q \in R$, $\Lambda(p q) = 0$.

\begin{proposition}
  \label{prop:kernel-is-ideal}
  Let $I_{\Lambda}$ be the kernel of ${H}_{\Lambda}$.
  Then, $I_{\Lambda}$ is an ideal of $R$.
\end{proposition}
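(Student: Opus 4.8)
The statement to prove is that $I_\Lambda := \ker H_\Lambda$ is an ideal of $R$. Since $I_\Lambda$ is by definition the kernel of a linear map, it is automatically a $\kk$-vector subspace of $R$; the only thing that needs verification is that it is closed under multiplication by arbitrary elements of $R$. So the plan is: take $p \in I_\Lambda$ and $q \in R$, and show $qp \in I_\Lambda$, i.e. $H_\Lambda(qp) = 0$.

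\smallskip

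The key observation is the characterisation of the kernel recalled just before the statement: a polynomial $p \in R$ lies in $\ker H_\Lambda$ if and only if $p \star \Lambda = 0$, equivalently $\Lambda(p\,h) = 0$ for all $h \in R$. So first I would record this reformulation explicitly. Then, given $p \in I_\Lambda$ and $q \in R$, I want to show $\Lambda((qp)h) = 0$ for every $h \in R$. But $(qp)h = p(qh)$ by commutativity and associativity of $R$, and $qh \in R$, so $\Lambda((qp)h) = \Lambda(p(qh)) = 0$ since $p \in I_\Lambda$ and $qh$ ranges over (a subset of) $R$. Hence $qp \in I_\Lambda$. Phrased module-theoretically: $\ker H_\Lambda$ is exactly the annihilator of $\Lambda$ under the $R$-module action $\star$ on $\dual R$, namely $\{p \in R : p \star \Lambda = 0\} = \operatorname{Ann}_R(\Lambda)$, and the annihilator of any element of a module is always an ideal — this is the conceptual content, and one may simply invoke that once the identification $\ker H_\Lambda = \operatorname{Ann}_R(\Lambda)$ is noted.

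\smallskip

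There is essentially no obstacle here: the proof is a one-line consequence of the $R$-module structure on $\dual R$ introduced earlier in the excerpt together with the kernel characterisation already stated. The only mild point worth being careful about is not to conflate $H_\Lambda$ (defined on all of $R$, i.e. on formal objects / no degree truncation) with the truncated operators $H^{B,B'}_\Lambda$ — the ideal property genuinely uses that $H_\Lambda$ is the \emph{global} Hankel operator, since the argument needs $qh \in R$ with no degree constraint. I would state the proof in two short sentences: (i) recall $p \in I_\Lambda \iff \forall h \in R,\ \Lambda(ph) = 0$; (ii) for $q \in R$, $h \in R$, compute $\Lambda((qp)h) = \Lambda(p(qh)) = 0$, so $qp \in I_\Lambda$, and combine with the evident fact that $I_\Lambda$ is a linear subspace.
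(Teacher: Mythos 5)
Your proof is correct and follows essentially the same route as the paper: both use the characterisation $p\in I_{\Lambda}\iff \Lambda(ph)=0$ for all $h\in R$, and then absorb the extra factor into the test polynomial to get closure under multiplication (the paper also spells out closure under addition, which you rightly note is automatic for the kernel of a linear map). Your annihilator remark is a nice conceptual repackaging but not a different argument.
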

\begin{proof}
  Let $p_1, p_2 \in I_{\Lambda}$. Then for all $q \in R$,
  $\Lambda( (p_1+p_2)q) = \Lambda(p_1 q) + \Lambda( p_2 q) = 0$.
  Thus, $p_1 + p_2 \in I_{\Lambda}$.
   
  If $p \in I_{\Lambda}$ and $p' \in R$,
  then for all $q \in R$, it holds $\Lambda (p p' q) = 0$. 
  Thus $p p' \in I_{\Lambda}$ 
  and $I_{\Lambda}$ is an ideal.
\end{proof}

Let $\AA = R / I_{\Lambda}$ be the quotient algebra of polynomials 
modulo the ideal $I_{\Lambda}$, which, as  Proposition~\ref{prop:kernel-is-ideal} 
states is the kernel of ${H_{\Lambda}}$. The rank of $H_{\Lambda}$  is the
dimension of $\AA$ as a $\kk$-vector space.

\begin{definition}
For any $B\subset R$, let $B^{+} = B\cup x_{1}B \cup \cdots \cup x_{n}B$ and
$\partial B= B^{+}\setminus B$.
\end{definition}
 
\begin{proposition}\label{prop:basis:ideal}
  Assume that $\rank( {H}_{\Lambda}) = r < \infty$ 
  and let $B=\{b_1, \dots,  b_r\} \subset R$ such that 
  $\mat{H}_{\Lambda}^B$ is invertible. 
  Then $b_1, \dots, b_r$ is a basis of $\AA$. If $1 \in \<B\>$ 
the ideal $I_{\Lambda}$
is generated by $\ker H^{B^{+}}_{\Lambda}$.
\end{proposition}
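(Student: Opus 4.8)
The plan is to treat the two assertions separately. For the first, I would use that $\dim_{\kk}\AA$ equals the rank of $H_{\Lambda}$, so that $\dim_{\kk}\AA=r=|B|$, and therefore it suffices to prove that $b_{1},\dots,b_{r}$ are linearly independent modulo $I_{\Lambda}$. Indeed, if $\sum_{i}\lambda_{i}b_{i}\in I_{\Lambda}=\ker H_{\Lambda}$, then $\Lambda\big((\sum_{i}\lambda_{i}b_{i})\,b_{j}\big)=0$ for every $j=1,\dots,r$, which says that the vector $(\lambda_{i})_{i}$ lies in the kernel of $\mat{H}_{\Lambda}^{B}$; as $\mat{H}_{\Lambda}^{B}$ is invertible, $(\lambda_{i})_{i}=0$. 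Hence $B$ is a basis of $\AA$, and in particular $R=\langle B\rangle\oplus I_{\Lambda}$ as $\kk$-vector spaces, a fact I will reuse.

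For the second assertion, write $N:=(\ker H^{B^{+}}_{\Lambda})$ for the ideal generated by $\ker H^{B^{+}}_{\Lambda}$; the goal is $N=I_{\Lambda}$. The inclusion $N\subseteq I_{\Lambda}$ will follow, since $I_{\Lambda}$ is an ideal by Proposition~\ref{prop:kernel-is-ideal}, once we show $\ker H^{B^{+}}_{\Lambda}\subseteq I_{\Lambda}$. For this, given $p\in\ker H^{B^{+}}_{\Lambda}\subseteq\langle B^{+}\rangle$, write $p=\sum_{i}\lambda_{i}b_{i}+g$ with $g\in I_{\Lambda}$ (possible as $B$ spans $\AA$); then $g=p-\sum_{i}\lambda_{i}b_{i}\in\langle B^{+}\rangle\cap I_{\Lambda}$, and this intersection is trivially contained in $\ker H^{B^{+}}_{\Lambda}$ (an element of $I_{\Lambda}$ is annihilated by $\Lambda$ against all of $R$, a fortiori against $\langle B^{+}\rangle$). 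Hence $\sum_{i}\lambda_{i}b_{i}=p-g\in\ker H^{B^{+}}_{\Lambda}$; pairing it against each $b_{j}\in B\subseteq B^{+}$ gives once more $\mat{H}_{\Lambda}^{B}(\lambda_{i})_{i}=0$, so $(\lambda_{i})_{i}=0$ and $p=g\in I_{\Lambda}$.

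The reverse inclusion $I_{\Lambda}\subseteq N$ is the crux, and it is here that the hypothesis $1\in\langle B\rangle$ intervenes. My strategy is to prove $R=\langle B\rangle+N$ and then conclude by a dimension count: the images of $b_{1},\dots,b_{r}$ would then span $R/N$, so $\dim_{\kk}R/N\le r$, while $N\subseteq I_{\Lambda}$ gives a surjection $R/N\twoheadrightarrow R/I_{\Lambda}$ with $\dim_{\kk}R/I_{\Lambda}=r$; hence $\dim_{\kk}R/N=r$ and $N=I_{\Lambda}$. To obtain $R=\langle B\rangle+N$, I would check that $V:=\langle B\rangle+N$ is a $\kk$-subspace that contains $1$ and is stable under multiplication by each $x_{i}$ — this forces $V$ to contain every monomial, so $V=R$. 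Containing $1$ is precisely the hypothesis. For stability, take $v=\sum_{j}\lambda_{j}b_{j}+h$ with $h\in N$; then $x_{i}v=\sum_{j}\lambda_{j}(x_{i}b_{j})+x_{i}h$ with $x_{i}h\in N$ since $N$ is an ideal. As $b_{1},\dots,b_{r}$ is a basis of $\AA$, each $x_{i}b_{j}$ can be written $x_{i}b_{j}=\sum_{k}c_{k}b_{k}+\rho$ with $\rho\in I_{\Lambda}$; since $x_{i}b_{j}\in x_{i}B\subseteq B^{+}$ and the $b_{k}\in B\subseteq B^{+}$, we get $\rho\in\langle B^{+}\rangle\cap I_{\Lambda}\subseteq\ker H^{B^{+}}_{\Lambda}\subseteq N$, hence $x_{i}b_{j}\in\langle B\rangle+N=V$ and therefore $x_{i}v\in V$.

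I expect the main obstacle to be this last inclusion $I_{\Lambda}\subseteq N$: one must turn the finitely many rewriting relations $x_{i}b_{j}-\sum_{k}c_{k}b_{k}\in\ker H^{B^{+}}_{\Lambda}$ into a generating set for all of $I_{\Lambda}$, and the assumption $1\in\langle B\rangle$ is exactly what allows the subspace $\langle B\rangle+N$ to be saturated up to the whole ring by repeated multiplication by the variables. Without it, $\langle B\rangle+N$ need not contain $1$ and the argument collapses; this is the border-basis flavour of the statement.
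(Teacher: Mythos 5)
Your proof is correct. For the first assertion you argue exactly as the paper does on the injectivity side (a combination $\sum_i\lambda_i b_i\in I_\Lambda$ paired against $B$ gives a kernel vector of $\mat{H}^B_\Lambda$), and you replace the paper's explicit spanning argument (that $b_1\star\Lambda,\dots,b_r\star\Lambda$ span the image of $H_\Lambda$, whence $R=\langle B\rangle\oplus I_\Lambda$) by the dimension count $\dim_\kk\AA=\rank H_\Lambda=r$; both are legitimate and essentially equivalent. The real difference is in the second assertion: the paper merely asserts that the relations $x_jb_k-\sum_i\mu_i^{j,k}b_i$, which lie in $\ker H^{B^+}_\Lambda$, generate $I_\Lambda$ (leaning implicitly on the border-basis/normal-form machinery it cites elsewhere), whereas you actually prove it. Your two steps — first that $\ker H^{B^+}_\Lambda\subseteq I_\Lambda$, which is not automatic and genuinely needs the invertibility of $\mat{H}^B_\Lambda$ together with $R=\langle B\rangle\oplus I_\Lambda$, and then that $V=\langle B\rangle+N$ contains $1$ and is stable under multiplication by the variables, so $V=R$ and a dimension count forces $N=I_\Lambda$ — constitute the standard saturation argument that the paper leaves unwritten, and it is the same mechanism that reappears inside the paper's proof of Theorem \ref{th:commute} (the projection $\pi$ built from the multiplication operators). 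So your route is slightly more self-contained than the paper's own text, at the cost of a longer write-up; it also makes visible exactly where the hypothesis $1\in\langle B\rangle$ is used, which the paper's one-line justification does not.
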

\begin{proof}
  Let us first prove that $\Set{b_1, \ldots, b_r} \cap I_{\Lambda} =\{0\}$.
  Let $p \in \langle b_1, \ldots, b_r \rangle \cap I_{\Lambda}.$ 
  Then $p =  \sum_i{ p_i \,b_i}$ with $p_{i} \in \kk$ and $\Lambda (p \, b_j) = 0$.
  The second equation implies that $\mat{H}_{\Lambda}^B \cdot \bvec{p} = \bvec{0}$, 
  where $\bvec{p}=[p_1, \dots, p_r]^{t}\in \kk^{r}$. 
  Since $\mat{H}_{\Lambda}^B$ is invertible, this
  implies that $\bvec{p} = \bvec{0}$ and $p = 0$.

  As a consequence, we deduce that 
  $b_1 \star \Lambda, \dots, b_r \star \Lambda$ 
  are linearly independent elements of $\dual{R}$. 
  This is so, because otherwise there exists
  $\bvec{m} = [\mu_1, \ldots, \mu_r]^{\top} \neq \bvec{0}$, 
  such that 
  $\mu_1 (b_1 \star \Lambda) + \dots  + \mu_r (b_r \star \Lambda) = 
  (\mu_1 b_1 + \cdots + \mu_r b_r) \star \Lambda = 0$. 
  As $\Set{b_1, \ldots, b_r} \cap \mathrm{Ker}( \mat{H}_{\Lambda}) = \set{ 0}$, this
  yields a contradiction.

  Consequently, $\set{b_1 \star \Lambda, \ldots, b_r \star \Lambda}$ span the image
  of ${H}_{\Lambda}$. For any $p \in R$, it holds that   
  $p \star \Lambda = \sum_{i = 1}^r{\mu_i  (b_i \star \Lambda)}$ 
  for some $\mu_1, \ldots, \mu_r \in \kk$. We
  deduce that $p - \sum_{i = 1}^r \mu_i b_i \in I_{\Lambda}$. This yields the
  decomposition 
   $R = \langle b_1, \ldots, b_r \rangle \oplus K$,
  $R = B \oplus I_{\Lambda}$,
  and shows that
  $b_1, \ldots, b_r$ is a basis of $\AA$.

If $1\in \<B\>$, the ideal $I_{\Lambda}$ is generated by the relations $x_{j}
b_{k} - \sum_{i = 1}^r \mu_i^{j,k} b_i \in I_{\Lambda}$. These are precisely
in the kernel of $H_{\Lambda}^{B^{+}}$.
\end{proof}

\begin{proposition}
  \label{prop:form-decomp}
  If $\rank( {H}_{\Lambda}) = r < \infty$, 
  then $\AA$ is of dimension $r$ over $\kk$ and there exist
  $\zeta_1, \ldots, \zeta_d \in \kk^n$,
  where $d \leq r$, and $p_i \in \kk[ \partial_1, \dots, \partial_n]$, such
  that
  \begin{equation}
    \Lambda = \sum_{i = 1}^d \un_{\zeta_i} \circ p_i ( \bvec{\partial})
    \label{eq:lambda}
  \end{equation}
  Moreover the multiplicity of $\zeta_{i}$ is the dimension of the vector space
  spanned the inverse system generated by $\un_{\zeta_i} \circ p_i ( \bvec{\partial})$.
\end{proposition}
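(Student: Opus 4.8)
The plan is to use that $\AA = R/I_\Lambda$ is a finite-dimensional, hence Artinian, $\kk$-algebra, and to transport its decomposition into local factors to the dual side via Macaulay duality. First I would record that, by Proposition~\ref{prop:kernel-is-ideal}, $I_\Lambda = \ker H_\Lambda = \{q\in R : q\star\Lambda = 0\}$ is an ideal and, as recalled above, $\dim_\kk\AA = \rank H_\Lambda = r$; in particular $\AA$ is Artinian, so (since $\kk$ is algebraically closed) $V(I_\Lambda)$ is a finite set of $\kk$-points $\zeta_1,\dots,\zeta_d$, and from $\sum_{i}\mathrm{mult}(\zeta_i) = \dim_\kk\AA = r$ one gets $d\le r$. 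Writing $\mathfrak{m}_{\zeta_i}$ for the maximal ideal of $\zeta_i$, I would then invoke the primary decomposition $I_\Lambda = Q_1\cap\cdots\cap Q_d$, with $Q_i$ being $\mathfrak{m}_{\zeta_i}$-primary, together with the Chinese Remainder Theorem, to obtain $\AA \cong \prod_{i=1}^d \mathcal{A}_i$ with $\mathcal{A}_i := R/Q_i$ local Artinian of dimension $\mathrm{mult}(\zeta_i)$; let $e_1,\dots,e_d\in R$ be lifts of the corresponding idempotents of $\AA$.

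Next I would dualize. The subspace $\AA^{*} = \{\mu\in R^{*}: \mu(I_\Lambda)=0\}$ is an $R$-submodule of $R^{*}$ and splits as $\AA^{*} = \bigoplus_{i=1}^d \mathcal{A}_i^{*}$ with $\mathcal{A}_i^{*} = \{\mu\in R^{*}:\mu(Q_i)=0\}$; because $Q_i$ is $\mathfrak{m}_{\zeta_i}$-primary, every element of $\mathcal{A}_i^{*}$ is annihilated by a power of $\mathfrak{m}_{\zeta_i}$, i.e. is ``supported at $\zeta_i$''. Since $\Lambda$ vanishes on $I_\Lambda$ we have $\Lambda\in\AA^{*}$; I would set $\Lambda_i := e_i\star\Lambda$ and check, using that the $\AA$-action on $\mathcal{A}_j^{*}$ factors through $\AA\twoheadrightarrow\mathcal{A}_j$ so that $e_i$ acts as the identity on $\mathcal{A}_i^{*}$ and as $0$ on $\mathcal{A}_j^{*}$ for $j\ne i$, that $\Lambda_i\in\mathcal{A}_i^{*}$ and $\Lambda = \sum_{i=1}^d\Lambda_i$.

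To bring each $\Lambda_i$ into the required shape, I would apply the automorphism of $R$ that translates $\zeta_i$ to the origin: since $\Lambda_i$ is supported at $\zeta_i$, the transported functional vanishes on some power $\mathfrak{m}_0^{N}$ of the maximal ideal at the origin, hence, under the identification $R^{*}=\kk[[\db]]$, it is a finite combination $\sum_{|\alpha|<N} c_\alpha\,\db^{\alpha}$; as $\mathrm{char}\,\kk = 0$ one has $\db^{\alpha} = \tfrac{1}{\alpha!}\,\un_0\circ\partial^{\alpha}$, so this combination equals $\un_0\circ q_i(\bvec{\partial})$ with $q_i = \sum_{|\alpha|<N}\tfrac{c_\alpha}{\alpha!}\,\partial^{\alpha}$, and transporting back yields $\Lambda_i = \un_{\zeta_i}\circ p_i(\bvec{\partial})$ for some $p_i\in\kk[\partial_1,\dots,\partial_n]$. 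Summing over $i$ gives the decomposition \eqref{eq:lambda}.

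For the multiplicity claim, observe that the map $q\mapsto q\star\Lambda_i$ identifies the inverse system $R\star\Lambda_i$ with $R/\mathrm{Ann}(\Lambda_i)$, and that $\mathrm{Ann}(\Lambda_i)\supseteq Q_i$ (because $\Lambda_i\in\mathcal{A}_i^{*}$ and $Q_i$ is an ideal), whence $\dim_\kk(R\star\Lambda_i)\le\dim_\kk\mathcal{A}_i = \mathrm{mult}(\zeta_i)$. On the other hand $\Lambda_i = e_i\star\Lambda$ lies in $\mathrm{im}\,H_\Lambda = R\star\Lambda$ while $R\star\Lambda_i\subseteq\mathcal{A}_i^{*}$ and the $\mathcal{A}_i^{*}$ are in direct sum, so $R\star\Lambda = \bigoplus_{i=1}^d R\star\Lambda_i$, giving $\sum_i\dim_\kk(R\star\Lambda_i) = \rank H_\Lambda = r = \sum_i\mathrm{mult}(\zeta_i)$. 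Comparing the two lines termwise forces $\dim_\kk(R\star\Lambda_i) = \mathrm{mult}(\zeta_i)$ for every $i$, which is the assertion, since $R\star\Lambda_i$ is exactly the inverse system generated by $\Lambda_i = \un_{\zeta_i}\circ p_i(\bvec{\partial})$. I expect the only genuinely delicate point to be this last matching: a priori the inverse system generated by a single functional $\Lambda_i$ could fail to account for the whole multiplicity of $\zeta_i$ (it would, were the local factor $\mathcal{A}_i$ not Gorenstein), and what rescues it is the global dimension count above, which also shows a posteriori that each $\mathcal{A}_i$ inherits the Gorenstein property of $\AA = R/\mathrm{Ann}(\Lambda)$. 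The remaining ingredients --- the structure theory of Artinian algebras and Macaulay's inverse systems --- are standard; see e.g. \cite{EM08}, \cite{ia-book-1999}.
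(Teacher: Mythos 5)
Your proof is correct, but it takes a different (much more self-contained) route than the paper's. The paper's own proof consists only of the dimension count $\dim_\kk \mathcal{A}_{\Lambda} = \rank H_{\Lambda} = r$, hence at most $r$ zeros of $I_{\Lambda}$, followed by a direct appeal to the structure theorem of \cite{EM08} (Th.~7.34, p.~185), which is precisely the statement you re-derive. Your argument is in effect a proof of that cited theorem: primary decomposition $I_{\Lambda}=Q_1\cap\cdots\cap Q_d$ and the CRT idempotents $e_i$; the verification that $\Lambda_i:=e_i\star\Lambda$ annihilates $Q_i$ (because $e_iq\in\bigcap_j Q_j$ for $q\in Q_i$) and that $\sum_i\Lambda_i=\Lambda$; the translation-to-the-origin step using $\db^{\alpha}=\frac{1}{\alpha!}\,\un_0\circ\partial^{\alpha}$, which is where $\mathrm{char}\,\kk=0$ enters (consistent with the remark following the proposition in the paper); and the termwise comparison of $\sum_i\dim_\kk(R\star\Lambda_i)=\rank H_{\Lambda}=r=\sum_i \mathrm{mult}(\zeta_i)$ with the one-sided bounds $\dim_\kk(R\star\Lambda_i)\le\dim_\kk (R/Q_i)$ coming from $Q_i\subseteq\mathrm{Ann}(\Lambda_i)$, which settles the multiplicity claim. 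What each approach buys: the paper gains brevity by outsourcing the substance to \cite{EM08}, while your version makes the mechanism explicit and correctly isolates the one delicate point --- that the inverse system generated by the single functional $\Lambda_i$ accounts for the whole local multiplicity, i.e.\ that each local factor is Gorenstein --- resolving it by the global rank count rather than by any local argument, exactly as needed.
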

\begin{proof}
  Since $\rank( \mat{H}_{\Lambda}) = r$, 
  the dimension of the vector space $\mathcal{A}_{\Lambda}$ is also $r$.
  Thus the number of zeros of the ideal $I_{\Lambda}$,
  say $\{\zeta_1, \ldots, \zeta_d \}$ is at most $r$, viz. $d \leq r$. 
  We can apply the structure Theorem \cite[Th.~7.34, p. 185]{EM08}
  in order to get the decomposition.
\end{proof}
In characteristic $0$, the inverse system of  $\un_{\zeta_i} \circ p_i (
\bvec{\partial})$ by $p_{i}$ is isomorphic to the vector space generated by
$p_{i}$ and its derivatives of any order with respect to the variables $\partial_{i}$.
In general characteristic, we replace the derivatives by the product by the
``inverse'' of the variables \cite{mp-jcomplexity-2000}, \cite{EM08}.

\begin{definition} For $T^{*}\in R_{\delta_{1},\ldots,\delta_{k}}^{*}$, we call generalized decomposition of ${T^{*}}$
a decomposition such that ${T^{*}} = \sum_{i = 1}^d \un_{\zeta_i}
\circ p_i ( \bvec{\partial})$ where the sum for $i=1,\ldots,d$ of the dimensions of the vector spaces 
  spanned by the inverse system generated by $\un_{\zeta_i} \circ
p_i ( \bvec{\partial})$ is minimal. This minimal sum of dimensions is called the length of $f$.
\end{definition}
This definition extends the definition introduced in \cite{ia-book-1999} for
binary forms. The length of $\dual{T}$ is the rank of the corresponding
Hankel operator $H_{\Lambda}$.

% In this case, the kernel of ${H}_{\Lambda}$ contains the polynomials which vanish with
% % multiplicity greater than $ d_i \assign \deg (p_i)$ at the point $\zeta_i$, for $i = 1,
% \ldots, d$.
% For any $\Lambda \in \AA$, let ${H}_{\Lambda}$ denote the quasi-Hankel matrix of
% the residues,
% \begin{displaymath}
%   \mat{H}_{\Lambda} = ( \Lambda( \xb^{\alpha + \beta}))_{\alpha, \beta \in E}
% \end{displaymath}
\begin{theorem} \label{prop:decomp-rank}
Let $\Lambda \in \dual{R}$ such that $\Lambda = \sum_{i = 1}^r \gamma_{i}\, \un_{\zeta_i}$
  with $\gamma_{i}\neq 0$ and $\zeta_{i}$ distinct points of $\kk^{n}$, iff
$\rank H_{\Lambda}=r$ and $I_{\Lambda}$ is a radical ideal.
\end{theorem}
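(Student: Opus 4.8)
The plan is to prove both implications using the structure theory already recorded in Proposition~\ref{prop:form-decomp} and the basic properties of the Hankel operator. Throughout I work with $R = \kk[\xa_1,\dots,\xa_k]$ (or simply $\kk[x_1,\dots,x_n]$) and $\Lambda \in \dual R$.

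\textbf{($\Rightarrow$)} Suppose $\Lambda = \sum_{i=1}^r \gamma_i\, \un_{\zeta_i}$ with $\gamma_i \neq 0$ and the $\zeta_i$ pairwise distinct. First I would show $I_\Lambda = I(\{\zeta_1,\dots,\zeta_r\})$, the (radical) vanishing ideal of the finite point set. One inclusion is clear: if $p(\zeta_i) = 0$ for all $i$, then for every $q \in R$, $\Lambda(pq) = \sum_i \gamma_i\, p(\zeta_i) q(\zeta_i) = 0$, so $p \in I_\Lambda$. For the reverse inclusion, suppose $p \in I_\Lambda$, i.e. $\sum_i \gamma_i\, p(\zeta_i) q(\zeta_i) = 0$ for all $q$. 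By Lagrange interpolation over the distinct points $\zeta_i$ (possible since $\kk$ is infinite, being algebraically closed), for each fixed $j$ I can choose $q$ with $q(\zeta_j) = 1$ and $q(\zeta_i) = 0$ for $i \neq j$; this forces $\gamma_j\, p(\zeta_j) = 0$, hence $p(\zeta_j) = 0$. So $I_\Lambda = I(\{\zeta_i\})$, which is radical, and $\AA = R/I_\Lambda \cong \kk^r$ by the Chinese Remainder Theorem (the $r$ evaluation functionals $\un_{\zeta_i}$ are linearly independent, again by interpolation). Therefore $\rank H_\Lambda = \dim_\kk \AA = r$, using the fact recorded after Proposition~\ref{prop:kernel-is-ideal} that the rank of $H_\Lambda$ equals $\dim_\kk \AA$.

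\textbf{($\Leftarrow$)} Suppose $\rank H_\Lambda = r < \infty$ and $I_\Lambda$ is radical. By Proposition~\ref{prop:form-decomp}, $\AA$ has dimension $r$ and $\Lambda = \sum_{i=1}^d \un_{\zeta_i} \circ p_i(\bvec\partial)$ with $d \le r$ and the $\zeta_i$ distinct (these are exactly the zeros of $I_\Lambda$). Since $I_\Lambda$ is radical and has $d$ zeros, $\AA = R/I_\Lambda \cong \kk^d$ by the Nullstellensatz plus CRT, so $r = \dim_\kk \AA = d$. The key remaining point is that a radical $I_\Lambda$ forces each $p_i$ to be a nonzero constant: the multiplicity of $\zeta_i$ in the decomposition equals the dimension of the inverse system generated by $\un_{\zeta_i}\circ p_i(\bvec\partial)$ (as stated in Proposition~\ref{prop:form-decomp}), and $I_\Lambda$ radical means every zero is simple, i.e. has multiplicity $1$; an inverse system of dimension $1$ supported at $\zeta_i$ is spanned by $\un_{\zeta_i}$ alone, which happens precisely when $p_i$ is a nonzero scalar $\gamma_i$. (Concretely: the local component of $\AA$ at $\zeta_i$ is $\kk$, and the dual of this local ring is the space of functionals supported at $\zeta_i$ annihilating $I_\Lambda$, which is $1$-dimensional and spanned by evaluation.) Hence $\Lambda = \sum_{i=1}^r \gamma_i\, \un_{\zeta_i}$ with $\gamma_i \neq 0$ and $\zeta_i$ distinct.

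The main obstacle is the $(\Leftarrow)$ direction, specifically making rigorous the passage from ``$I_\Lambda$ radical'' to ``each $p_i$ is constant'' via the multiplicity statement. This hinges on correctly interpreting the structure theorem \cite[Th.~7.34]{EM08}: one must identify the multiplicity of $\zeta_i$ with the local dimension $\dim_\kk \AA_{\zeta_i}$ (the localization of $\AA$ at the maximal ideal of $\zeta_i$), check that $\sum_i \dim_\kk \AA_{\zeta_i} = \dim_\kk \AA = r$, and then invoke that $I_\Lambda$ radical is equivalent to $\AA$ being reduced, equivalently $\AA_{\zeta_i} = \kk$ for each $i$, equivalently each multiplicity is $1$. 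Given this dictionary the rest is routine linear algebra and interpolation over the infinite field $\kk$.
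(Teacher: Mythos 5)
Your proposal is correct and follows essentially the same route as the paper: the forward direction via interpolation polynomials at the distinct points to identify $I_{\Lambda}$ with the radical vanishing ideal and get $\rank H_{\Lambda}=r$, and the converse via Proposition~\ref{prop:form-decomp}, with radicality forcing each multiplicity to be $1$ so that the $p_i$ are nonzero constants. Your treatment just spells out more explicitly (multiplicity as local dimension, CRT) what the paper's terser proof leaves implicit.
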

\begin{proof}
If $\Lambda = \sum_{i = 1}^r \gamma_{i}\, \un_{\zeta_i}$, with
$\gamma_{i}\neq 0$ and $\zeta_{i}$ distinct points of $\kk^{n}$.
Let $\{e_{1},\ldots,e_{r}\}$ be a family of interpolation polynomials at these
points: $e_{i}(\zeta_{j})=1$ if $i=j$ and $0$ otherwise.
Let $I_{\zeta}$ be the ideal of polynomials which vanish at
$\zeta_{1},\ldots,\zeta_{r}$. It is a radical ideal. 
We have clearly $I_{\zeta}\subset I_{\Lambda}$. For any $p \in I_{\Lambda}$,
and $i=1,\ldots,r$, 
we have $p\star\Lambda(e_{i})= \Lambda(p\, e_{i}) = p(\zeta_{i})=0$, which
proves that $I_{\Lambda} = I_{\zeta}$ is a radical ideal.
As the quotient $\AA$ is generated by the interpolation polynomials
$e_{1},\ldots,e_{r}$, $H_{\Lambda}$ is of rank $r$.

Conversely, if $\rank H_{\Lambda}=r$, by Proposition \ref{prop:form-decomp}
$ \Lambda = \sum_{i = 1}^r \un_{\zeta_i} \circ p_i ( \bvec{\partial})$
with a polynomial of degree $0$, since the multiplicity of $\zeta_{i}$ is
$1$. This concludes the proof of the equivalence.
\end{proof}
In the binary case this also corresponds to the border rank of $T^*$,
therefore the $r$-th minors of the Hankel operator give equations for the
$r$-th secant variety to the rational normal curves \cite{ia-book-1999}.

In order to compute the zeroes of an ideal $I_{\Lambda}$ when we know a basis
of $\AA$, we exploit the properties of the operators of multiplication in
$\AA$:
$M_{a} : \AA \rightarrow \AA$, such that
$\forall b \in \AA, M_{a}(b)= a\, b$ and its transposed operator $M_{a}^{t} :
\dAA \rightarrow \dAA$, such that for 
$\forall  \gamma \in \dAA,  M_{a}^{\top}(\gamma) = a\star \gamma$.

The following proposition expresses a similar result, based on the properties
of the duality. 
\begin{proposition}
  \label{prop:Hl-mx}
  For any linear form $\Lambda \in \dual{R}$ such that 
  $\rank H_{\Lambda} < \infty$ and any $a \in \AA$, we have
  \begin{equation}
    H_{a \star \Lambda} = M_a^{t} \circ H_{\Lambda} \label{eq:multa}
  \end{equation} 
\end{proposition}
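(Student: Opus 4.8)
The plan is to verify the identity \eqref{eq:multa} directly from the definitions, by evaluating both sides as maps $R\rightarrow\dual{R}$ on an arbitrary pair of polynomials and reducing the equality to associativity of the $R$-module structure on $\dual{R}$ together with commutativity of $R$. Before the computation I would check that all objects appearing in \eqref{eq:multa} make sense. Since $I_{\Lambda}=\ker H_{\Lambda}$ is an ideal of $R$ (Proposition~\ref{prop:kernel-is-ideal}), for $q\in I_{\Lambda}$ we have $q\star\Lambda=0$, hence $\Lambda(pq)=(q\star\Lambda)(p)=0$ for every $p\in R$. Consequently, if $a,a'\in R$ have the same class in $\AA=R/I_{\Lambda}$, then $(a-a')\star\Lambda=0$, so $a\star\Lambda=a'\star\Lambda$; thus $a\star\Lambda$ is well defined for $a\in\AA$, and $H_{a\star\Lambda}$ is meaningful. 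Similarly, for any $p\in R$ the linear form $p\star\Lambda$ vanishes on $I_{\Lambda}$, so $H_{\Lambda}$ takes its values in $\dAA\subset\dual{R}$ and factors through $\AA$; and $M_{a}$ is well defined on $\AA$ because $I_{\Lambda}$ is an ideal, so its transpose $M_{a}^{t}$ is a well-defined endomorphism of $\dAA$. Both $H_{a\star\Lambda}$ and $M_{a}^{t}\circ H_{\Lambda}$ are therefore linear maps $R\rightarrow\dAA$.

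Next I would evaluate both sides on arbitrary $b,c\in R$. For the left-hand side, unwinding the definition of the Hankel operator and of the module action,
$$
H_{a\star\Lambda}(b)(c)=\bigl(b\star(a\star\Lambda)\bigr)(c)=(a\star\Lambda)(b\,c)=\Lambda(a\,b\,c).
$$
For the right-hand side, $H_{\Lambda}(b)=b\star\Lambda$, and applying $M_{a}^{t}$ gives $a\star(b\star\Lambda)$, so
$$
\bigl(M_{a}^{t}\circ H_{\Lambda}\bigr)(b)(c)=\bigl(a\star(b\star\Lambda)\bigr)(c)=(b\star\Lambda)(a\,c)=\Lambda(b\,a\,c)=\Lambda(a\,b\,c),
$$
the last equality using commutativity of $R$. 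Since $b$ and $c$ are arbitrary, the two operators coincide, which is exactly \eqref{eq:multa}.

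The only genuine subtlety — and the step I would be most careful about — is the passage between $R$ and $\AA$: one must observe that $a\star\Lambda$ and the action of $M_{a}^{t}$ are insensitive to the choice of a representative of $a$ in $R$, which is precisely where the ideal property of $I_{\Lambda}=\ker H_{\Lambda}$ (Proposition~\ref{prop:kernel-is-ideal}) enters. Everything else is a routine unwinding of the definitions of $H_{\Lambda}$, of the $R$-module structure $\star$ on $\dual{R}$, and of the transpose $M_{a}^{t}$.
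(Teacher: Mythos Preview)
Your proof is correct and follows the same approach as the paper: a direct unwinding of the definitions, reducing the identity to $\Lambda(abc)$ on both sides (the paper's one-line version is $H_{a\star\Lambda}(p)=ap\star\Lambda=a\star(p\star\Lambda)=M_a^{t}\circ H_{\Lambda}(p)$). Your additional care about well-definedness over $\AA$ versus $R$ is not in the paper but is a welcome clarification rather than a different method.
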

\begin{proof}
  By definition, $\forall p \in R, H_{a \star \Lambda} (p) = a\, p\star \Lambda
  = a\star  (p\star \Lambda) = M_{a}^{\top} \circ H_{\Lambda} (p)$.
\end{proof}
We have the following well-known theorem: 
\begin{theorem}[\cite{CLO2,CLO,EM08}]
  \label{th:stickelberger}
  Assume that $\AA$ is a finite dimensional vector space. Then 
  $\Lambda = \sum_{i = 1}^d \un_{\zeta_i} \circ p_i ( \bvec{\partial})$
  for $\zeta_{i}\in \kk^{n}$ and 
  $p_i ( \partial) \in \kk[ \partial_1, \dots, \partial_n]$ and 
\par $\bullet$  the eigenvalues of the operators ${M}_a$ and ${M}^{t}_a$,  
    are given by $\Set{ a( \zeta_1), \dots, a( \zeta_r)}$.
\par $\bullet$ the common eigenvectors of the operators $({M}_{x_i}^{t})_{1 \leq i \leq n}$ are
    (up to scalar) $\un_{\zeta_{i}}$.
\end{theorem}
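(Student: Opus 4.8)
The plan is to exploit Proposition~\ref{prop:Hl-mx}, which says $H_{a\star\Lambda} = M_a^t\circ H_\Lambda$, together with the decomposition $\Lambda = \sum_{i=1}^d \un_{\zeta_i}\circ p_i(\bvec{\partial})$ supplied by Proposition~\ref{prop:form-decomp} (applicable since $\AA$ is finite dimensional). First I would fix a basis $B=\{b_1,\dots,b_r\}$ of $\AA$ with $\mat{H}_\Lambda^B$ invertible, as in Proposition~\ref{prop:basis:ideal}, and use it to transport the identity $H_{a\star\Lambda}=M_a^t\circ H_\Lambda$ to matrices: in this basis one gets $\mat{H}_{a\star\Lambda}^B = \mat{M}_a^t\cdot\mat{H}_\Lambda^B$, hence $\mat{M}_a = (\mat{H}_\Lambda^B)^{-1}\cdot(\mat{H}_{a\star\Lambda}^B)^t$ (up to the usual transpose bookkeeping), which in particular shows $M_a$ is well-defined and its matrix is computable from the Hankel data. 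This is the computational payoff of the statement, but the eigen-structure claims are what need proof.

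For the eigenvalue claim, I would invoke Stickelberger's theorem / the classical structure of $\AA = R/I_\Lambda$: since $I_\Lambda$ is a zero-dimensional ideal with zero set $\{\zeta_1,\dots,\zeta_d\}$ (the finitely many common roots, $d\le r$), there is a decomposition $\AA \cong \prod_{i=1}^d \AA_i$ into local Artinian algebras $\AA_i$ supported at $\zeta_i$ (via the Chinese Remainder Theorem after passing to a primary decomposition of $I_\Lambda$). Under this decomposition $M_a$ is block-diagonal, and on the block $\AA_i$ the operator $M_a$ has the single eigenvalue $a(\zeta_i)$, because $a - a(\zeta_i)$ lies in the maximal ideal $\mathfrak{m}_{\zeta_i}$ and hence acts nilpotently on the local factor $\AA_i$. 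Summing over $i$ gives that the spectrum of $M_a$ (and of $M_a^t$, which has the same characteristic polynomial) is exactly $\{a(\zeta_1),\dots,a(\zeta_d)\}$ — I'd state it as $\{a(\zeta_1),\dots,a(\zeta_r)\}$ with multiplicities matching the dimensions of the $\AA_i$, consistently with the theorem statement.

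For the eigenvector claim about $(M_{x_i}^t)_{1\le i\le n}$, the key point is duality: an element $\gamma\in\dAA = \dual{(R/I_\Lambda)}$, i.e. a linear form vanishing on $I_\Lambda$, is a common eigenvector of all $M_{x_i}^t$ with $M_{x_i}^t(\gamma)=x_i\star\gamma = \lambda_i\,\gamma$ if and only if $\gamma(x_i\,q) = \lambda_i\,\gamma(q)$ for all $q\in R$. Taking $q=1$ gives $\lambda_i = \gamma(x_i)/\gamma(1)$ (note $\gamma(1)\neq 0$, else iterating forces $\gamma=0$), and then an induction on monomial degree shows $\gamma(x^\alpha) = \lambda^\alpha\,\gamma(1)$, so after normalizing $\gamma(1)=1$ we get $\gamma = \un_\lambda$ where $\lambda=(\lambda_1,\dots,\lambda_n)$; for this to descend to $\dAA$ we need $\un_\lambda$ to vanish on $I_\Lambda$, i.e. $\lambda$ to be a common zero of $I_\Lambda$, so $\lambda\in\{\zeta_1,\dots,\zeta_d\}$. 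Conversely each $\un_{\zeta_i}$ manifestly satisfies $x_j\star\un_{\zeta_i} = \zeta_{j,i}\,\un_{\zeta_i}$, so these are exactly the common eigenvectors up to scalar.

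The main obstacle is the local structure step: one must justify the primary decomposition $I_\Lambda = \bigcap_i Q_i$ with $\sqrt{Q_i}=\mathfrak{m}_{\zeta_i}$ and the resulting product decomposition of $\AA$ — for this I would simply cite the standard references already invoked in the excerpt (\cite{CLO2,CLO,EM08}) rather than reprove it, since Theorem~\ref{th:stickelberger} is explicitly quoted as "well-known". Given that, the eigenvalue statement is immediate from nilpotency in each local factor, and the eigenvector statement is the short duality computation above; the only care needed is tracking transposes and the normalization $\gamma(1)\neq 0$.
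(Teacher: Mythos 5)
Your argument is correct, but note that the paper does not prove Theorem~\ref{th:stickelberger} at all: it is quoted as a well-known result with pointers to \cite{CLO2,CLO,EM08}, so there is no in-paper proof to compare against. What you wrote is essentially the standard argument from those references: for the eigenvalues, the splitting of $\AA$ into local Artinian factors supported at the zeros $\zeta_i$ of $I_{\Lambda}$ (primary decomposition plus Chinese Remainder), with $a-a(\zeta_i)$ acting nilpotently on each factor; for the eigenvectors, the duality computation showing that a common eigenvector $\gamma$ of the $M_{x_i}^{t}$ must have $\gamma(1)\neq 0$ and, after normalization, equals $\un_{\lambda}$ with $\lambda$ a common zero of $I_{\Lambda}$, together with the trivial converse $x_j\star\un_{\zeta_i}=\zeta_{j,i}\un_{\zeta_i}$. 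Your preliminary paragraph transporting $H_{a\star\Lambda}=M_a^{t}\circ H_{\Lambda}$ (Proposition~\ref{prop:Hl-mx}) to matrices is not needed for the theorem itself --- in the paper that identity is only used afterwards, to turn the eigenvector statement into the generalized eigenproblem \eqref{eq:geiv} --- and your closing remark that the statement's ``$r$'' should be read as the $d\le r$ distinct points, with multiplicities given by $\dim_{\kk}\AA_i$, is the right reading of the paper's slightly loose indexing.
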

%\begin{proof}
%  \cite{CLO2,CLO,EM08}
%\end{proof} 
Using the previous proposition, one can recover the points  $\zeta_{i}\in
\kk^{n}$ by eigenvector computation as follows.
Assume that $B\subset R$ with $|B|=\rank( H_{\Lambda})$, then equation \eqref{eq:multa}
and its transposition yield
$$ 
\mat{H}^{B}_{a \star \Lambda} = \mat{M}_{a}^{t} \mat{H}^{B}_{\Lambda} =
\mat{H}^{B}_{\Lambda} \, \mat{M}_{a},
$$  
where $\mat{M}_{a}$ is the matrix of multiplication by $a$ in the basis $B$
of $\AA$. By Theorem \ref{th:stickelberger}, the common solutions of the generalized
eigenvalue problem 
\begin{equation}\label{eq:geiv}
  (\mat{H}_{a \star \Lambda} - \lambda \, \mat{H}_{\Lambda}) \bvec{v} = \mat{O}
\end{equation}
for all $a\in R$, 
yield the common eigenvectors $\mat{H}^{B}_{\Lambda} \bvec{v}$ of
$\mat{M}_a^{t}$, that is the evaluation $\un_{\zeta_{i}}$ at the roots. 
Therefore, these common eigenvectors $\mat{H}^{B}_{\Lambda} \bvec{v}$ are up to a
scalar, the vectors $[b_{1}(\zeta_{i}),\ldots,b_{r}(\zeta_{i})]$
$(i=1,\ldots,r)$.
Notice that it is sufficient to compute the common eigenvectors of 
 $(\mat{H}_{x_{i} \star \Lambda} , \mat{H}_{\Lambda})$ for $i=1,\ldots, n$

If $\Lambda = \sum_{i = 1}^d \gamma_{i} \un_{\zeta_i}$ $(\gamma_{i}\neq
0)$, then the roots are simple, and one eigenvector computation is enough:
for any $a\in R$, $\mat{M}_{a}$ is diagonalizable and the
generalized eigenvectors $\mat{H}^{B}_{\Lambda} \bvec{v}$ are, up to a scalar,
the evaluation $\un_{\zeta_{i}}$ at the roots.

Coming back to our problem of partially symmetric tensor decomposition,
$T^{*}\in {R^{*}_{\delta_{1},\ldots,\delta_{k}}}$ admits an affine
decomposition of rank $r$ iff $T^{*}$ coincide on
$R_{\delta_{1},\ldots,\delta_{k}}$ with 
$$ 
\Lambda = \sum_{i=1}^{r} \gamma_{i}\, \un_{\zeta_{i}},
$$ 
for some distinct $\zeta_{1},\ldots, \zeta_{r} \in \kk^{n_{1}}\times \cdots
\times \kk^{n_{k}}$ and some $\gamma_{i}\in \kk-\{0\}$. 
Then, by theorem \ref{prop:decomp-rank},
 $H_{\Lambda}$ is of rank $r$ and $I_{\Lambda}$ is radical. 

Conversely, given $H_{\Lambda}$ of rank $r$ with  $I_{\Lambda}$ radical
which coincides on $R_{\delta_{1},\ldots,\delta_{k}}$ with $T^{*}$, by proposition
\ref{prop:form-decomp}, $\Lambda= \sum_{i=1}^{r} \gamma_{i}\, \un_{\zeta_{i}}$ and 
extends $T^{*}$, which thus admits an affine decomposition.

Therefore we can say that if the border rank of $T$ is $r$ then also
$\rank(H_{\Lambda})=r$. Conversely if $\rank(H_{\Lambda})=r$, we can only claim that
the border rank of $T$ is at least $r$. 

We say that  $\Lambda \in  R^{*}$ extends $T^{*}\in
{R^{*}_{\delta_{1},\ldots,\delta_{k}}}$, if
$\Lambda^{*}_{|R_{\delta_{1},\ldots,\delta_{k}}}= T^{*}$. 
The problem of decomposition of $T^{*}$ can then be reformulated as follows:
\par\textbf{Truncated moment problem.} 
{\em Given $T^{*}\in {R^{*}_{\delta_{1},\ldots,\delta_{k}}}$, find the smallest $r$ such that there exists 
  $\Lambda \in  R^{*}$ which extends $T^{*}$  with $H_{\Lambda}$ of rank $r$
  and $I_{\Lambda}$ a radical ideal.}

In the next section, we will describe an algorithm to solve the truncated
moment problem.

%%%%%%%%%%%%%%%%%%%%%%%%%%%%%%%%%%%%%%%%%%%%%%%%%%%%%%%%%%%%%%%%%%%%%%
% JB
%%%%%%%%%%%%%%%%%%%%%%%%%%%%%%%%%%%%%%%%%%%%%%%%%%%%%%%%%%%%%%%%%%%%%%
\section{Algorithm}
\label{sec:trunc:hankel}\label{sec:5}
In this section, we first describe the algorithm from a geometric point of
view and the algebraic computation it induces). Then we characterize which conditions
$T^{*}$ can be extended to $\Lambda \in \dRR$ with $H_{\Lambda}$ is of
rank $r$. The algorithm is described in \ref{algo:dec}.
It extends the one in \cite{BracCMT09:laa} which applies only for symmetric
tensors. The approach used in \cite{BGI11} for the rank of tensors in 
$\SV_2(S^d(E))$ and in $\SV_3(S^d(E))$ allows to avoid to loop again at step 4: if one 
doesn't get simple roots, then it is possible to use other techniques to
compute the rank. Unfortunately the mathematical knowledge on the
stratification by rank of secant varieties is nowadays not complete, hence
the techniques developped in \cite{BGI11} cannot be used to improve
algorithms for higher border ranks yet.

%\newpage
%\begin{enumerate}
\begin{algorithm2e}[ht]\caption{\textsc{Decomposition algorithm}}\label{algo:dec}
\KwIn{a tensor $T\in S^{\delta_{1}}(E_{1})\otimes \cdots \otimes
S^{\delta_{k}}(E_{k})$.}
\KwOut{a minimal decomposition of $T$.}
Set $r=0$;
\begin{enumerate}
 \item \label{1} %Determine if its border rank is $r$ -- or more generally
  Determine if $T^{*}$ can be extended to $\Lambda \in \dRR$ with $\rank H_{\Lambda}=r$;
 \item \label{2} Find  if there exists $r$ distinct points $P_1, \ldots ,
  P_s\in \SV(S^{\delta_{1}}(E_{1}) \otimes \cdots \otimes
    S^{\delta_{k}}(E_{k}))$ such that $T\in \langle P_1, \ldots , P_s \rangle
      \simeq \Proj^{s-1}$; Equivalently compute the roots of $\ker H_{\Lambda}$
by generalized eigenvector computation \eqref{eq:geiv} and check that the
eigenspaces are simple;
 \item \label{3} If the answer to \ref{2} is YES, then it means that $T\in
  \SV_r^{o}(S^{\delta_{1}}(E_{1})\otimes \cdots \otimes
  S^{\delta_{k}}(E_{k}))\setminus \SV_{r-1}(S^{\delta_{1}}(E_{1})\otimes \cdots \otimes
  S^{\delta_{k}}(E_{k}))$; therefore the rank of $T$
  is actually $r$ and we are done; 
 \item \label{4} If the answer to \ref{2} is NO, then it means that $T\not\in
\SV_r^{o}(S^{\delta_{1}}(E_{1})\otimes \cdots \otimes S^{\delta_{k}}(E_{k}))$ hence its rank is bigger
  than $r$; Repeat this procedure from
  step \ref{2} with $r+1$. 
\end{enumerate}
\end{algorithm2e}

We are going to characterize now under which conditions $T^{*}$ can be
extended to $\Lambda \in \dRR$ with $H_{\Lambda}$ of rank $r$ (step \ref{1}).

We need the following technical property on the bases of $\AA$, that we will consider:
\begin{definition}
Let $B$ be a subset of monomials in $R$. We say that $B$ is
connected to $1$ if $\forall m\in B$ either $m=1$ or there exists $i\in
[1,n]$ and $m'\in B$ such that $m=x_{i}\, m'$.
\end{definition}

Let $B, B' \subset  R_{\delta_{1},\ldots,\delta_{k}}$ be a two sets of monomials connected to $1$.
We consider the formal Hankel matrix 
$$ 
\form{H}^{B,B'}_{\Lambda} = (h_{\alpha+\beta})_{\alpha \in B', \beta\in B},
$$
with $h_{\alpha}=T^{*}(\xb^{\alpha})=c_{\alpha}$ if 
$\xb^{\alpha}\in R_{\delta_{1},\ldots,\delta_{k}}$ and otherwise $h_{\alpha}$ is a variable.
The set of these new variables is denoted $\hb$.

Suppose that $\form{H}^{B,B'}_{\Lambda}$ is invertible in $\kk(\hb)$, then we
define the formal multiplication operators 
$$
\form{M}_{i,l}^{B,B'}(\hb) := (\form{H}^{B,B'}_{\Lambda})^{-1} \form{H}^{B,B'}_{x_{i,l}\star
  \Lambda}
$$
for every variable $x_{i,l} \in R$.\\
We use the following theorems which extend the results of \cite{ML08} to the cases of
distinct sets of monomials indexing the rows and columns of the  Hankel operators.
They characterizes the cases where $\kk[\xb] = B \oplus I_{\Lambda}$:
\begin{theorem}\label{th:commute}
Let $B=\{\xb^{\beta_{1}},\ldots,\xb^{\beta_{r}}\}$ and
$B'=\{\xb^{\beta'_{1}}$, $\ldots$, $\xb^{\beta'_{r}}\}$ be two sets of monomials of
in $R_{\delta_{1},\ldots,\delta_{k}}$, connected to $1$ and let $\Lambda$ be a linear form that belongs to
$(\<B'\cdot B^{+}\>_{\delta_{1},\ldots,\delta_{k}})^*$. Let $\Lambda(\hb)$ be the linear form of
$\dual{\<B'\cdot B^{+}\>}$ defined by
$\Lambda(\hb)(\xb^{\alpha})=\Lambda(\xb^{\alpha})$ if $\xb^{\alpha}\in R_{\delta_{1},\ldots,\delta_{k}}$ and
$h_{\alpha} \in \kk$ otherwise. Then, $\Lambda(\hb)$ admits an extension
$\tilde{\Lambda} \in \dRR$ such that $H_{\tilde{\Lambda}}$ is of rank $r$
with $B$ and $B'$ basis of $A_{\tilde{\Lambda}}$ iff  
\begin{equation}
  \mathcal{M}_{i,l}^{B}(\hb) \circ \mathcal{M}_{j,q}^{B}(\hb)
  -\mathcal{M}_{j,q}^{B}(\hb) \circ \mathcal{M}_{i,l}^{B}(\hb)=0
\end{equation}
$(0 \le l,q\le k, 1 \le i\le n_{l}, 1 \le j \le n_{q})$
and $\det(\mathcal{H}^{B',B}_{\Lambda(\hb)}) \neq 0$. Moreover, such a $\tilde{\Lambda}$ is unique.
\end{theorem}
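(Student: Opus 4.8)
The plan is to reduce the statement to the known commutation criterion for flat extensions of Hankel matrices (the single-basis version from \cite{ML08} and \cite{BracCMT09:laa}) by a symmetrization argument that compares the two bases $B$ and $B'$. The key observation is that $\det(\mathcal{H}^{B',B}_{\Lambda(\hb)}) \neq 0$ means the truncated Hankel bilinear form restricted to $\langle B\rangle \times \langle B'\rangle$ is a perfect pairing; combined with the fact that both $B$ and $B'$ are connected to $1$, this will allow us to build formal multiplication tables on $\langle B\rangle$ and to show that, whenever the $\mathcal{M}_{i,l}^{B}(\hb)$ commute, the resulting operators define a genuine $0$-dimensional quotient structure.

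First I would record the precise meaning of the formal objects: the entries of $\mathcal{H}^{B,B'}_{\Lambda(\hb)}$ that involve monomials outside $R_{\delta_1,\ldots,\delta_k}$ are the free variables $\hb$, and $\det(\mathcal{H}^{B',B}_{\Lambda(\hb)})$ is a nonzero polynomial in $\kk[\hb]$, so we may work over the localization $\kk[\hb]_{\det}$ (equivalently $\kk(\hb)$) where $\mathcal{H}^{B',B}_{\Lambda(\hb)}$ is invertible and the operators $\mathcal{M}_{i,l}^{B}(\hb) = (\mathcal{H}^{B',B}_{\Lambda(\hb)})^{-1}\mathcal{H}^{B',B}_{x_{i,l}\star\Lambda(\hb)}$ are well-defined matrices with entries in that ring. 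Next I would prove the forward implication: if $\tilde\Lambda \in \dual R$ extends $\Lambda(\hb)$ with $\rank H_{\tilde\Lambda} = r$ and $B$ (and $B'$) a basis of $\mathcal{A}_{\tilde\Lambda}$, then by Proposition~\ref{prop:basis:ideal} the $x_{i,l}\, b_k$ reduce modulo $I_{\tilde\Lambda}$ against the basis $B$, the reduction matrices are exactly $\mathcal{M}_{i,l}^{B}$ specialized at the true values of $\hb$, and these are the matrices of the pairwise-commuting multiplication operators $M_{x_{i,l}}$ on $\mathcal{A}_{\tilde\Lambda}$; since the commutation identities hold at the specialized point and the entries are rational in $\hb$, one argues they hold identically (this is where one uses that the extension is \emph{unique} given the invertibility hypothesis — so the specialization point is the generic one). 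The invertibility of $\mathcal{H}^{B',B}_{\Lambda(\hb)}$ is immediate since $B,B'$ are bases of $\mathcal{A}_{\tilde\Lambda}$.

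For the converse — the substantive direction — assume the $\mathcal{M}_{i,l}^{B}(\hb)$ pairwise commute and $\det(\mathcal{H}^{B',B}_{\Lambda(\hb)}) \neq 0$. The idea is: commuting matrices $\mathcal{M}_{i,l}^{B}(\hb)$ over $\kk(\hb)$ define a $\kk(\hb)$-algebra structure on $\kk(\hb)^r$ with basis indexed by $B$, hence (since $B$ is connected to $1$ and $1 \in \langle B\rangle$) a surjection $\kk(\hb)[\xb] \twoheadrightarrow \kk(\hb)^r$ whose kernel $I$ is a $0$-dimensional ideal with $\kk(\hb)[\xb] = \langle B\rangle \oplus I$. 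One then defines $\tilde\Lambda$ on all of $\kk[\xb]$ by $\tilde\Lambda(\xb^\alpha) := $ the $B$-coordinate pairing of the normal form of $\xb^\alpha$ with the appropriate row, i.e. one transports $\Lambda(\hb)$ along this splitting; the connectedness of $B$ and $B'$ and the construction of $\mathcal{M}_{i,l}$ via the $B'$-indexed rows guarantee that this $\tilde\Lambda$ genuinely restricts to $\Lambda(\hb)$ on $\langle B' \cdot B^+\rangle$ — here one checks the compatibility $\tilde\Lambda(x_{i,l}\, b'_j\, b_k)$ agrees whether computed by pushing $x_{i,l}$ into the $B$-factor or the $B'$-factor, which is exactly the content of the commutation plus the defining relation of $\mathcal{M}_{i,l}$. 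Finally, $\rank H_{\tilde\Lambda} = r$ because $\mathcal{H}^{B',B}_{\tilde\Lambda}$ is invertible (a submatrix of size $r$) and $I \subseteq \ker H_{\tilde\Lambda}$ gives the reverse bound; $B$ and $B'$ are then bases of $\mathcal{A}_{\tilde\Lambda}$ by Proposition~\ref{prop:basis:ideal}, and uniqueness follows since any such extension must have these same multiplication tables and hence the same values on all monomials. The main obstacle I anticipate is the bookkeeping in this last compatibility check: verifying that the two ways of evaluating $\tilde\Lambda$ on the ``boundary'' monomials $B'\cdot B^+$ coincide requires carefully tracking how the non-square shape $B \neq B'$ interacts with the one-sided reduction, and showing the commutation relations among the $\mathcal{M}_{i,l}^{B}(\hb)$ — rather than a two-sided commutation involving also $B'$-operators — are exactly what is needed. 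This is precisely the ``non-trivial extension'' of \cite{ML08} alluded to in the introduction, and it is where the proof will spend most of its effort.
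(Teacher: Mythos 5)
Your proposal follows essentially the same route as the paper's proof: the forward direction identifies the $\mathcal{M}_{i,l}$ (at the given values) with the commuting multiplication matrices of $\mathcal{A}_{\tilde\Lambda}$ via Proposition~\ref{prop:Hl-mx}, and the converse uses commutation plus connectedness to $1$ to obtain $R=\langle B\rangle\oplus I$ with $I$ an ideal (the criterion of \cite{mourrain-aaecc-1999}), defines $\tilde\Lambda(p)=\Lambda(\hb)\bigl(p(\mathcal{M})(1)\bigr)$, verifies by the inductive compatibility check on $b'\in B'$ that it extends $\Lambda(\hb)$ on $B'\cdot B^{+}$, and deduces rank $r$ and uniqueness exactly as you outline. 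One caveat: since the $h_\alpha$ in the statement are fixed scalars, all conditions are checked at those values, so your detour through $\kk(\hb)$ and the claim that commutation at the specialized point ``holds identically'' in $\hb$ is unnecessary (and would not be justified as stated); the pointwise commutation you already obtain from the multiplication matrices of $\mathcal{A}_{\tilde\Lambda}$ is all that is needed.
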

%\ifx\@full\@empty 
%\relax 
%\else
\begin{proof} 
If there exists $\tilde{\Lambda} \in \dRR$ which extends
$\Lambda(\hb)$, with $H_{\tilde{\Lambda}}$ of rank $r$  and $B$ and $B'$ basis of $A_{\tilde{\Lambda}}$ then $\mathcal{H}^{B',B}_{\Lambda(\hb)}$ is invertible and the tables of multiplications by the variables $x_{i,l}$:
$$
\mathcal{M}_{i,l}:=(\mathcal{H}^{B',B}_{\Lambda(\hb)})^{-1} \mathcal{H}^{B',B}_{x_{i,l}\star \Lambda(\hb)}
$$
(Proposition \ref{prop:Hl-mx}) commute.

Conversely suppose that these matrices commute and consider them 
as linear operators on $\langle B \rangle$.
Then by \cite{mourrain-aaecc-1999}, we have such a decomposition
$R = \<B\> \oplus I$ where $I$ is an ideal of $R$. As a matter of fact, using
commutation relation and the fact that $B$ is connected to 1, one can easily
prove that the following morphism: 
$$
\pi: R \longrightarrow \langle B \rangle
$$
$$
p \rightarrow p(\mathcal{M})(1)
$$
is a projection on $\langle B \rangle$ whose kernel is an ideal $I$ of $R$ (note that for any $p \in \langle B \rangle$, $p(M)(1) = p$). 

We define $\tilde{\Lambda} \in \dRR$ as follows:  
$\forall p\in \RR, \tilde{\Lambda}(p)=\Lambda(p(\form{M})(1))$ where
$p(\mathcal{M})$ is the operator obtained by substitution of the variables
$x_{i,l}$ by the commuting operators $\mathcal{M}_{i,l}$.
Notice that $p(\mathcal{M})$ is also the operator of multiplication by $p$
modulo $I$. 
 
Let us prove by induction on the degree of $b'\in B'$ that for all $b\in B:$ 
\begin{equation}\label{eq:comm1}
\Lambda(\mathbf{h})(b' \, b)=\Lambda(\mathbf{h})(b'(\mathcal{M})(b))
\end{equation}
 and thus by linearity that
 \begin{equation}\label{eq:comm2}
\Lambda(\mathbf{h})(b'\,p)=\Lambda(\mathbf{h})(b'(\mathcal{M})(p))
 \end{equation}
for all $p \in \langle B\rangle$.

The property is obviously true for $b'=1$. Suppose now that $b' \in B'$ is a monomial of degree strictly greater than zero. As $B'$ is connected to $1$, one has $b' = x_{j,q}\, b''$ for some
variable $x_{j,q} \in R$ and some element $b''\in B'$ of degree smaller than the degree of $b'$. 
By construction of the operators of multiplication $(\mathcal{M}_{i,l})$, we have 
$$
\Lambda(\mathbf{h})(b'\,b)= \Lambda(\mathbf{h})(b''\,x_{j,q}\,b) = \Lambda(\mathbf{h})(b''\,\mathcal{M}_{j,q}(b)).
$$
Finally we have that $\Lambda(\mathbf{h})(b'\,b)=\Lambda(\mathbf{h})(b'(\mathcal{M})(b))$ and \eqref{eq:comm1} is proved.\\
Let us deduce now that $\tilde{\Lambda}$ extends $\Lambda(\mathbf{h})$ i.e that for all $b^+\in B^+$ ans $b' \in B'$ we have: 
$$
\tilde{\Lambda}(b'\,b^+):=\Lambda(\mathbf{h})((b'\,b^+)(\mathcal{M})(1)) = \Lambda(\mathbf{h})(b'\,b^+).
$$
Indeed, from \eqref{eq:comm2} we have: 
$$
\Lambda(\mathbf{h})((b'\,b^+)(\mathcal{M})(1))=\Lambda(\mathbf{h})((b'(\mathcal{M})\,b^+(\mathcal{M})(1)) = \Lambda(\mathbf{h})(b'\,[b^+(\mathcal{M})(1)])
$$
as $b^+(\mathcal{M})(1)$ belongs to $\langle B \rangle$. Then, by definition of multiplication operators $(\mathcal{M}_{i,l})$ we have
$$
\Lambda(\mathbf{h})(b'\,[b^+(\mathcal{M})(1)]) = \Lambda(\mathbf{h})(b'\,b^+).
$$
Thus, we have 
\begin{equation}\label{eq:comm3}
\tilde{\Lambda}(b'\,b^+) = \Lambda(\mathbf{h})(b'\,b^+)
\end{equation}
for all $b^+\in B^+$ ans $b' \in B'$ (i.e $\tilde{\Lambda}$ extends $\Lambda(\mathbf{h})$).\\

We eventually need to prove that $I_{\tilde{\Lambda}} = I:=\text{Ker}(\pi)$. By the definition of $\tilde{\Lambda}$ we obviously have that $I \subset I_{\tilde{\Lambda}}$. Let us prove that $I_{\tilde{\Lambda}} \subset I$: assume $p$ belongs to $I_{\tilde{\Lambda}}$, then from \eqref{eq:comm3}
$$
\tilde{\Lambda}(b'\,p(\mathcal{M})(1)) = \Lambda(\mathbf{h})(b'\,p(\mathcal{M})(1)) = 0
$$
for all $b' \in B'$. As $p(\mathcal{M})(1) \in \langle B \rangle$ and $\det(\mathcal{H}^{B',B}_{\Lambda})(\hb) \neq 0$, we deduce that $p(\mathcal{M})(1) = 0$ and that $p$ belongs to $I$. Thus we have $I_{\tilde{\Lambda}} \subset I$.\\

Eventually, $\tilde{\Lambda}$ extends $\Lambda(\mathbf{h})$ with $I_{\tilde{\Lambda}} = I:=\text{Ker}(\pi)$ and $\mathcal{A}_{\tilde{\Lambda}}$ equal to $R/I \simeq \langle B \rangle$ which is a zero dimensional algebra of multiplicity $r$ with basis $B$.

If there exists another $\Lambda' \in \dRR$ which extends $\Lambda(\hb) \in
\dual{\<B'\cdot B^{+}\>}$ with $\rank H_{\Lambda'}=r$, by proposition
\ref{prop:basis:ideal}, $\ker H_{\Lambda'}$ is generated by
$\ker H_{\Lambda'}^{B',B^{+}}$ and thus coincides with $\ker H_{\tilde{\Lambda}}$. As $\Lambda'$ coincides with $\tilde{\Lambda}$ on $B$,
the two elements of $\dRR$ must be equal.  This ends the proof of the
theorem.
\end{proof}
%\fi% 
%%%%%%%%%%%%%%%%%%%%%%%%%%%%%%%%%%%%%%%%%%%%%%%%%%%%%%%%%%%%%%%%%%%%%%

%%%%%%%%%%%%%%%%%%%%%%%%%%%%%%%%%%%%%%%%%%%%%%%%%%%%%%%%%%%%%%%%%%%%%%
%\ifx\@full\@empty \relax \else
The degree of these commutation relations is at most $2$ in the coefficients
of the multiplications matrices $\form{M}_{i,l}$. A direct computation yields the following, for 
$m\in B$:
\begin{itemize}
\item If $x_{i,l}, m \in B,$ and $x_{j,q}\,m \in B$ then $(\form{M}_{i,l}^{B} \circ \form{M}_{j,q}^{B}
  -\form{M}_{j,q}^{B} \circ \form{M}_{i,l}^{B})(m)\equiv 0$ in $\kk(\hb)$.
\item If $x_{i,l} m \in B$, $x_{j,q}\,m \not\in B$ then 
  $(\form{M}_{i,l}^{B} \circ \form{M}_{j,q}^{B} - \form{M}_{j,q}^{B} \circ
  \form{M}_{i,l}^{B})(m)$ is of degree $1$ in the coefficients of
  $\form{M}_{i,l},\form{M}_{j,q}$.
\item If $x_{i,l} m \not\in B$, $x_{j,q}\,m \not\in B$ then 
  $(\form{M}_{i,l}^{B} \circ \form{M}_{j,q}^{B} - \form{M}_{j,q}^{B}
  \circ \form{M}_{i,l}^{B})(m)$ is of degree $2$ in the coefficients of
  $\form{M}_{i,l},\form{M}_{j,q}$.
\end{itemize}
%\fi 
%%%%%%%%%%%%%%%%%%%%%%%%%%%%%%%%%%%%%%%%%%%%%%%%%%%%%%%%%%%%%%%%%%%%%%

We are going to give an equivalent characterization of the extension 
property, based on rank conditions:
\begin{theorem}\label{thm:rank}
Let $B=\{\xb^{\beta_{1}},\ldots,\xb^{\beta_{r}}\}$ and
$B'=\{\xb^{\beta_{1}'}$, $\ldots$, $\xb^{\beta_{r}'}\}$ be two sets of monomials in $R_{\delta_{1},\ldots,\delta_{k}}$, connected to $1$. Let $\Lambda$ be a linear form in $(\langle B'^+*B^+ \rangle_{\delta_{1},\ldots,\delta_{k}})^*$ and  $\Lambda(\hb)$ be the linear form of
$\dual{\<B'^+\cdot B^{+}\>}$ defined by
$\Lambda(\hb)(\xb^{\alpha})=\Lambda(\xb^{\alpha})$ if $\xb^{\alpha}\in R_{\delta_{1},\ldots,\delta_{k}}$ and
$h_{\alpha} \in \kk$ otherwise. Then, $\Lambda(\hb)$ admits an extension
$\tilde{\Lambda} \in \dRR$ such that $H_{\tilde{\Lambda}}$ is of rank $r$
with $B$ and $B'$ basis of $A_{\tilde{\Lambda}}$ iff all
$(r+1)\times(r+1)$-minors of $\mathcal{H}^{B'^+,B^{+}}_{\Lambda(\hb)}$ vanish and
$\det(\mathcal{H}^{B',B}_{\Lambda(\hb)}) \neq 0$.
\end{theorem}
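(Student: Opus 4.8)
The plan is to reduce Theorem~\ref{thm:rank} to Theorem~\ref{th:commute} by showing that the rank condition ``all $(r+1)\times(r+1)$-minors of $\mathcal{H}^{B'^+,B^+}_{\Lambda(\hb)}$ vanish, together with $\det(\mathcal{H}^{B',B}_{\Lambda(\hb)})\neq 0$'' is equivalent to the commutation condition ``$\mathcal{M}_{i,l}^B(\hb)$ and $\mathcal{M}_{j,q}^B(\hb)$ pairwise commute, together with $\det(\mathcal{H}^{B',B}_{\Lambda(\hb)})\neq 0$.'' Since the non-vanishing of $\det(\mathcal{H}^{B',B}_{\Lambda(\hb)})$ appears on both sides, the whole content is the equivalence, under that invertibility hypothesis, between the rank-$r$ condition on the big matrix $\mathcal{H}^{B'^+,B^+}_{\Lambda(\hb)}$ and the commutation of the formal multiplication operators. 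One direction is essentially immediate: if $\tilde\Lambda$ extends $\Lambda(\hb)$ with $H_{\tilde\Lambda}$ of rank $r$ and $B,B'$ bases of $A_{\tilde\Lambda}$, then $\mathcal{H}^{B'^+,B^+}_{\Lambda(\hb)}$ is a submatrix of the (rank $r$) matrix $\mathbf{H}_{\tilde\Lambda}$, hence has rank $\le r$; combined with the invertible $r\times r$ block it has rank exactly $r$, so all $(r+1)$-minors vanish. Conversely, the previous theorem already supplies the extension from commutation, so it suffices to show the rank condition forces commutation (given $\det(\mathcal{H}^{B',B}_{\Lambda(\hb)})\neq 0$).

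So the core of the argument is: \emph{assume $\det(\mathcal{H}^{B',B}_{\Lambda(\hb)})\neq 0$ and all $(r+1)$-minors of $\mathcal{H}^{B'^+,B^+}_{\Lambda(\hb)}$ vanish; deduce the $\mathcal{M}_{i,l}^B(\hb)$ commute.} First I would observe that, because $\mathcal{H}^{B',B}_{\Lambda(\hb)}$ is invertible and sits as a submatrix of $\mathcal{H}^{B'^+,B^+}_{\Lambda(\hb)}$ which has rank $r$, every column of $\mathcal{H}^{B'^+,B^+}_{\Lambda(\hb)}$ indexed by a monomial $x_{i,l}m$ with $m\in B$ (i.e.\ indexed by $B^+$) is a unique linear combination of the $B$-columns, and the coefficient vectors of these combinations — read off row-wise against $B'$ and then, because of rank $r$, automatically against all of $B'^+$ — are exactly the columns of $\mathcal{M}_{i,l}^B(\hb) = (\mathcal{H}^{B',B}_{\Lambda(\hb)})^{-1}\mathcal{H}^{B',B}_{x_{i,l}\star\Lambda(\hb)}$. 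The point of enlarging the row index set from $B'$ to $B'^+$ (compared with Theorem~\ref{th:commute}) is precisely that the rank hypothesis propagates these relations one variable further, which is what lets one check $x_{i,l}(x_{j,q}m)$ against $x_{j,q}(x_{i,l}m)$ inside the matrix. Concretely, for $m\in B$, applying $\mathcal{M}_{i,l}$ then $\mathcal{M}_{j,q}$ corresponds to following the column relations in $\mathcal{H}^{B'^+,B^+}_{\Lambda(\hb)}$ twice; both $(\mathcal{M}_{i,l}\mathcal{M}_{j,q})(m)$ and $(\mathcal{M}_{j,q}\mathcal{M}_{i,l})(m)$ then express the \emph{same} functional — the row-vector $(\Lambda(\hb)(b'\,x_{i,l}x_{j,q}m))_{b'\in B'}$ — as a combination of $B$-columns, and uniqueness of that combination (invertibility of $\mathcal{H}^{B',B}_{\Lambda(\hb)}$) forces $(\mathcal{M}_{i,l}\mathcal{M}_{j,q}-\mathcal{M}_{j,q}\mathcal{M}_{i,l})(m)=0$ in $\kk(\hb)$. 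Here one must use that both $x_{i,l}m$ and $x_{j,q}m$ lie in $B^+$ so the relevant entries $\Lambda(\hb)(b'\,x_{i,l}m)$, $\Lambda(\hb)(b'\,x_{j,q}m)$, and even $\Lambda(\hb)(b'\,x_{i,l}x_{j,q}m)$ for $b'\in B'$, all appear as genuine entries of $\mathcal{H}^{B'^+,B^+}_{\Lambda(\hb)}$, which is exactly why the row set is $B'^+$ rather than $B'$.

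Having established commutation, I would then invoke Theorem~\ref{th:commute} verbatim: commutation of the $\mathcal{M}_{i,l}^B(\hb)$ plus $\det(\mathcal{H}^{B',B}_{\Lambda(\hb)})\neq 0$ produces a unique $\tilde\Lambda\in\dRR$ extending $\Lambda(\hb)$ (restricted to $\langle B'\cdot B^+\rangle$, which it determines) with $H_{\tilde\Lambda}$ of rank $r$ and $B,B'$ bases of $A_{\tilde\Lambda}$. A small bookkeeping point to address is compatibility of the hypotheses: in Theorem~\ref{thm:rank} the given datum $\Lambda$ lives in $(\langle B'^+*B^+\rangle_{\delta_1,\ldots,\delta_k})^*$, a larger space than in Theorem~\ref{th:commute}, so I would note that $\Lambda(\hb)$ here restricts to the $\Lambda(\hb)$ of the earlier theorem and that the rank condition guarantees the restriction loses no information; the uniqueness clause of Theorem~\ref{th:commute} then gives uniqueness here as well. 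For the converse direction's rank count I would also spell out the elementary linear-algebra fact that a matrix containing an invertible $r\times r$ submatrix and all of whose $(r+1)\times(r+1)$ minors vanish has rank exactly $r$, and conversely a rank-$r$ matrix has all larger minors zero.

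The main obstacle is the middle paragraph: making rigorous the claim that the rank condition on $\mathcal{H}^{B'^+,B^+}_{\Lambda(\hb)}$ literally encodes the operator identity $\mathcal{M}_{i,l}\mathcal{M}_{j,q}=\mathcal{M}_{j,q}\mathcal{M}_{i,l}$ as an identity in $\kk(\hb)$. One has to be careful that the column-dependency coefficients computed using only the $B'$-rows (which is how $\mathcal{M}_{i,l}$ is \emph{defined}) actually govern the full $B'^+$-rows as well — this is where rank $\le r$ is used — and that composing the two substitutions stays inside the span of the $B$-columns of the enlarged matrix, which needs $x_{i,l}m, x_{j,q}m\in B^+$ and therefore the connectedness of $B$ to $1$. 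Once that dictionary between ``column relations in the big Hankel matrix'' and ``products of formal multiplication operators'' is set up cleanly, the rest is a direct application of Theorem~\ref{th:commute} and the minor/rank equivalence.
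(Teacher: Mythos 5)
Your proposal is correct and follows essentially the same route as the paper: the easy direction via the submatrix/rank observation, and the converse by using the rank-$r$ hypothesis (with the invertible $\mathcal{H}^{B',B}_{\Lambda(\hb)}$ block) to extend the defining relations $\Lambda(\hb)(x_{i,l}b\,b')=\Lambda(\hb)(\mathcal{M}_{i,l}(b)\,b')$ from $b'\in B'$ to $b'\in B'^{+}$, which yields the commutation of the formal multiplication operators and lets one invoke Theorem~\ref{th:commute}. Your "column-relation" phrasing is just a matrix restatement of the paper's chain of identities, so no substantive difference.
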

%%%%%%%%%%%%%%%%%%%%%%%%%%%%%%%%%%%%%%%%%%%%%%%%%%%%%%%%%%%%%%%%%%%%%%
%\ifx\@full\@empty 
%\relax 
%\else 
\begin{proof}
First, if such a $\tilde{\Lambda}$ exists then $\mathcal{H}^{B',B}_{\Lambda(\hb)}$ is invertible and $\mathcal{H}_{\tilde{\Lambda}}$ is of rank $r$. Thus all the $(r+1)\times(r+1)$-minors of $\mathcal{H}^{B'^+,B^{+}}_{\Lambda(\hb)}$ are equal to zero.\\

Reciprocally, assume all $(r+1)\times(r+1)$-minors of $\mathcal{H}^{B'^+,B^{+}}_{\Lambda(\hb)}$ vanish and $\det(\mathcal{H}^{B',B}_{\Lambda(\hb)}) \neq 0$ then one can consider the same operators:
$$
\form{M}_{i,l}^{B,B'}(\hb) := (\form{H}^{B,B'}_{\Lambda(\hb)})^{-1} \form{H}^{B,B'}_{x_{i,l}\star
  \Lambda(\hb)}.
$$
By definition of these operators one has that
\begin{equation}\label{eq:det1}
\Lambda(\hb)(x_{i,l}b\,b') = \Lambda(\hb)(\mathcal{M}_{i,l}(b)\,b')
\end{equation}
for all $b \in B$ and $b' \in B'$. As the rank of $\mathcal{H}^{B'^+,B^{+}}_{\Lambda(\hb)}$ is equal to the rank of $\mathcal{H}^{B',B}_{\Lambda(\hb)}$ we easily deduce that \eqref{eq:det1} is also true for $b' \in B'^+$. Thus we have 
$$
\Lambda(\hb)(x_{j,q}x_{i,l}b\,b') = \Lambda(\hb)(x_{i,l}b\,b'x_{j,q}) = \Lambda(\hb)(\mathcal{M}_{i,l}(b)\,b'x_{j,q}) = 
$$
$$
= \Lambda(\hb)(x_{j,q}\,\mathcal{M}_{i,l}(b)\,b') = \Lambda(\hb)([\mathcal{M}_{j,q}\,\mathcal{M}_{i,l}](b)\,b')
$$
for all $b\in B$, $b'$ in $B'$ and $x_{i,l},x_{j,q} \in R$. Then,
$$
\Lambda(\hb)([\mathcal{M}_{j,q}\,\mathcal{M}_{i,l}](b)\,b') = \Lambda(\hb)([\mathcal{M}_{i,l}\,\mathcal{M}_{j,q}](b)\,b') = \Lambda(\hb)(x_{j,q}x_{i,l}b\,b')
$$
for all $b\in B$ and $b' \in B'$. As $\mathcal{H}^{B',B}_{\Lambda(\hb)}$ is invertible we deduce 
$$
[\mathcal{M}_{j,q}\,\mathcal{M}_{i,l}](b) = [\mathcal{M}_{i,l}\,\mathcal{M}_{j,q}](b)
$$
for all $b\in B$. Thus
$$
\mathcal{M}_{j,q}\,\mathcal{M}_{i,l} = \mathcal{M}_{i,l}\,\mathcal{M}_{j,q}.
$$
Finally, we conclude the proof by using Theorem \ref{th:commute}\,
\end{proof}
\begin{proposition}\label{prop:dec}
Let $B=\{\xb^{\beta_{1}},\ldots,\xb^{\beta_{r}}\}$ and
$B'=\{\xb^{\beta'_{1}}$, $\ldots$, $\xb^{\beta'_{r}}\}$ be two sets of monomials of
in $R_{\delta_{1},\ldots,\delta_{k}}$, connected to $1$ and $\Lambda$ be a linear form on $\langle B^{'+} * B^{+} \rangle$. Then, $\Lambda$ admits an extension $\tilde{\Lambda}$ in $\dRR$ such that
$H_{\tilde{\Lambda}}$ is of rank $r$ with $B$ (resp. B') a basis of $\mathcal{A}_{\tilde{\Lambda}}$ iff 
\begin{equation}\label{dec:H}
\mathbb{H}^{+}
  =
  \left(
    \begin{array}{cc}
      \mathbb{H} & \mathbb{G}' \\
      \mathbb{G}^{t} & \mathbb{J}  
    \end{array}
  \right),
\end{equation}
with 
$\mat{H}^{+}=\mathbb{H}^{B^{'+},B^{+}}_{\Lambda}$
$\mathbb{H}= \mathbb{H}^{B',B}_{\Lambda}$ and 
\begin{equation}\label{eq:W}
\mathbb{G}=\mathbb{H}^{t}\, \mathbb{W},
\mathbb{G}'=\mathbb{H}\, \mathbb{W}',
\mathbb{J}=\mathbb{W}^{t}\,\mathbb{H}\,\mathbb{W}'. 
\end{equation} 
for some matrix $\mathbb{W}\in \kk^{B \times \partial B'}$, $\mathbb{W}'\in \kk^{B' \times \partial B}$.
\end{proposition}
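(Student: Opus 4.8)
The plan is to derive this from Theorem~\ref{thm:rank} by reading its rank condition as the block factorization asserted here; the one concrete step is to identify the three off-diagonal blocks of $\mathbb{H}^{+}$ with $\mathbb{G}'$, $\mathbb{G}^{t}$, $\mathbb{J}$ via the reduction tables of the border monomials. Throughout I will write $B^{+}=B\sqcup\partial B$ and $B^{'+}=B'\sqcup\partial B'$, so that $\mathbb{H}^{+}=\mathbb{H}^{B^{'+},B^{+}}_{\Lambda}$ splits into the displayed $2\times2$ block shape with top-left block $\mathbb{H}=\mathbb{H}^{B',B}_{\Lambda}$; what has to be shown is that the $B'\times\partial B$, $\partial B'\times B$, $\partial B'\times\partial B$ blocks are $\mathbb{G}'$, $\mathbb{G}^{t}$, $\mathbb{J}$. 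Since on the left $B,B'$ are required to be bases of $\mathcal{A}_{\tilde\Lambda}$, I read $\det\mathbb{H}\neq0$ as tacitly part of the right-hand condition.

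For the forward direction, given such a $\tilde\Lambda$: because $B,B'$ are bases of $\mathcal{A}_{\tilde\Lambda}=R/I_{\tilde\Lambda}$ and $g\in I_{\tilde\Lambda}=\ker H_{\tilde\Lambda}$ gives $\tilde\Lambda(qg)=0$ for all $q\in R$, the matrix $\mathbb{H}=\mathbb{H}^{B',B}_{\tilde\Lambda}=\mathbb{H}^{B',B}_{\Lambda}$ is invertible, and each $m'\in\partial B'$ reduces modulo $I_{\tilde\Lambda}$ to a unique $m'\equiv\sum_{i}w_{i,m'}b'_{i}$, each $m\in\partial B$ to $m\equiv\sum_{j}w'_{j,m}b_{j}$; let $\mathbb{W}=(w_{i,m'})$, $\mathbb{W}'=(w'_{j,m})$. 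Plugging these into the entries $\tilde\Lambda(u'u)$ ($u'\in B^{'+}$, $u\in B^{+}$) and using $\tilde\Lambda(qg)=0$ for $g\in I_{\tilde\Lambda}$ together with $\tilde\Lambda(b'_{i}b_{j})=\mathbb{H}_{i,j}$, the $B'\times\partial B$ block comes out as $\mathbb{H}\,\mathbb{W}'$, the $\partial B'\times B$ block as $\mathbb{W}^{t}\mathbb{H}=(\mathbb{H}^{t}\mathbb{W})^{t}$, and the $\partial B'\times\partial B$ block as $\mathbb{W}^{t}\mathbb{H}\,\mathbb{W}'$. As $\tilde\Lambda$ agrees with $\Lambda$ on $\langle B^{'+}*B^{+}\rangle$, these are the entries of $\mathbb{H}^{+}$, which thus has the stated form with $\mathbb{G}=\mathbb{H}^{t}\mathbb{W}$, $\mathbb{G}'=\mathbb{H}\,\mathbb{W}'$, $\mathbb{J}=\mathbb{W}^{t}\mathbb{H}\,\mathbb{W}'$.

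For the converse, the three formulas are exactly the statement that
\[
\mathbb{H}^{+}=\begin{pmatrix}\mathrm{Id}_{r}\\ \mathbb{W}^{t}\end{pmatrix}\,\mathbb{H}\,\begin{pmatrix}\mathrm{Id}_{r}&\mathbb{W}'\end{pmatrix}.
\]
Since $\mathbb{H}$ is an invertible $r\times r$ matrix, the top-left $r\times r$ block of $\mathbb{H}^{+}$ is invertible, so $\operatorname{rank}\mathbb{H}^{+}\ge r$, while the factorization through an $r\times r$ matrix gives $\operatorname{rank}\mathbb{H}^{+}\le r$; hence $\operatorname{rank}\mathbb{H}^{+}=r$, i.e.\ all $(r+1)\times(r+1)$ minors of $\mathbb{H}^{B^{'+},B^{+}}_{\Lambda}$ vanish while $\det\mathbb{H}^{B',B}_{\Lambda}\neq0$. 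These are precisely the hypotheses of Theorem~\ref{thm:rank} (applied with the unknowns $\hb$ specialized to the values of $\Lambda$), which therefore yields a $\tilde\Lambda\in\dRR$ extending $\Lambda$ with $H_{\tilde\Lambda}$ of rank $r$ and $B,B'$ bases of $\mathcal{A}_{\tilde\Lambda}$. (Symmetrically, when $\mathbb{H}$ is invertible the block form holds for suitable $\mathbb{W},\mathbb{W}'$ iff the Schur complement $\mathbb{J}-\mathbb{G}^{t}\mathbb{H}^{-1}\mathbb{G}'$ of $\mathbb{H}$ in $\mathbb{H}^{+}$ vanishes, iff $\operatorname{rank}\mathbb{H}^{+}=\operatorname{rank}\mathbb{H}=r$, which is the link with Theorem~\ref{thm:rank}.) I do not expect a real obstacle: the substance is carried by Theorem~\ref{thm:rank}, and the only care needed is the transpose/index bookkeeping of the four blocks and reading $\det\mathbb{H}\neq0$ into the right-hand side.
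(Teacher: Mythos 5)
Your proof is correct and follows essentially the same route as the paper: both reduce the statement to Theorem~\ref{thm:rank} by observing that the block factorization through the invertible $r\times r$ matrix $\mathbb{H}$ is equivalent to $\rank\mathbb{H}^{+}=\rank\mathbb{H}=r$ (and you rightly note that $\det\mathbb{H}\neq 0$ must be read into the right-hand side). The only minor difference is in the forward direction, where you build $\mathbb{W},\mathbb{W}'$ from the reductions of the border monomials modulo $I_{\tilde\Lambda}$, whereas the paper obtains them purely linear-algebraically from the column-space inclusions forced by $\rank\mathbb{H}^{+}=\rank\mathbb{H}$; both are valid and equally short.
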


\begin{proof} 
According to Theorem \ref{thm:rank}, $\Lambda \in \langle B^{'+} * B^+ \rangle^*$ admits a (unique)
extension $\tilde{\Lambda} \in \dRR$ such that 
$H_{\tilde{\Lambda}}$ is of rank $r$ with $B$ (resp. B') a basis of $\mathcal{A}_{\tilde{\Lambda}}$, iff 
$H^{B^{'+},B^{+}}_{\tilde{\Lambda}}=H^{B^{'+},B^{+}}_{\Lambda}$ is of rank $r$.
Let us decompose $H^{+}_{\Lambda}$ as \eqref{dec:H} with 
$\mat{H}^{+}=\mat{H}^{B^{'+},B^{+}}_{\Lambda}$,
$\mat{H}=\mat{H}^{B',B}_{\Lambda}$. 

If we have $\mat{G}=\mat{H}^{t}\, \mat{W}, \mathbb{G}'=\mathbb{H}\, \mathbb{W}',
  \mat{J}=\mat{W}^{t}\,\mat{H}\,\mat{W}'$, then 
$$
\mat{H}^{+} = \left(
    \begin{array}{cc}
      \mat{H} & \mat{H}\, \mat{W}' \\
      \mat{W}^{t} \mat{H} & \mat{W}^{t}\,\mat{H}\,\mat{W}'
    \end{array}
  \right)
$$
is clearly of rank  $\le \rank \mat{H}$.

Conversely, suppose that $\rank \mat{H}^{+} = \rank \mat{H}$. 
This implies that the image of $\mat{G}'$ is in the image of $\mat{H}$. Thus,
there exists  $\mat{W}'\in \kk^{B' \times \partial B}$ such that
$\mat{G}'=\mat{H}\, \mat{W}'$. Similarly, there exists  $\mat{W}\in \kk^{B
  \times \partial B'}$ such that $\mat{G}=\mat{H}^{t}\, \mat{W}$.
Thus, the kernel of $( \mat{H}\  \mat{G}')$ (resp. $( \mat{H}^{t}\ \mat{G})$) contains 
$\left(
\begin{array}{c}
\mat{W}' \\
-\mat{I}
\end{array}\right)$ 
(resp. $\left(
\begin{array}{c}
\mat{W} \\
-\mat{I}
\end{array}\right)$).
As $\rank \mat{H}= \rank \mat{H}^{+}=r$, the kernel of $( \mat{H}\ \mat{G}')$ (resp. $( \mat{H}^{t}\ \mat{G})$) is the
kernel of $\mat{H}^{+}$ (resp $\mat{H}^{+t}$).
 Thus we have $\mat{J}= \mat{G}^{t}\, \mat{W}'=  \mat{W}^{t}\,\mat{H}\,\mat{W}'$. 
\end{proof}
Notice that if $\mat{H}$ is invertible, $\mat{W}'$, $\mat{W}$ are uniquely determined.

Introducing new variables $\mathbf{w}$, $\mathbf{w}'$ for the coefficients of the matrices $\mat{W},
\mat{W}$, solving the linear system
$\mathbb{G}=\mathbb{H}^{t}\, \mathbb{W},\mathbb{G}'=\mathbb{H}\,
\mathbb{W}'$, and reporting the solutions in the equation
$\mat{J}=\mat{W}^{t}\,\mat{H}\,\mat{W}'$, we obtain 
a new set of equations, bilinear in $\mathbf{w}$, $\mathbf{w}'$, which
characterize the existence of an extension $\Lambda$ on $R^{*}$.
%\fi
%%%%%%%%%%%%%%%%%%%%%%%%%%%%%%%%%%%%%%%%%%%%%%%%%%%%%%%%%%%%%%%%%%%%%%

%%%%%%%%%%%%%%%%%%%%%%%%%%%%%%%%%%%%%%%%%%%%%%%%%%%%%%%%%%%%%%%%%%%%%%
\ifx \@full \@empty \releax \else
This leads to the following system in the variables $\mathbf{h}$ and the coefficients
$\mathbf{w}$ of matrix $\mathbb{W}$. It characterizes the linear forms $\Lambda \in
R_{\delta_{1},...,\delta_{k}}^*$ that admit an extension $\tilde{\Lambda} \in \dRR$ such that
$H_{\tilde{\Lambda}}$ is of rank $r$ with $B$ a basis of $\mathcal{A}_{\tilde{\Lambda}}$.
\begin{equation} \label{syst:GH}
\mathcal{H}_{\Lambda}^{B,\partial B}(\mathbf{h}) - \mathcal{H}_{\Lambda}^{B}(\mathbf{h})\,
\mathbb{W}(\mathbf{w}) =0, \ \ 
\mathcal{H}_{\Lambda}^{\partial B, \partial B}(\mathbf{h}) - 
\mathbb{W}^{t}(\mathbf{w})\,\mathcal{H}_{\Lambda}^{B}(\mathbf{h})\,\mathbb{W}(\mathbf{w}) =0
\end{equation}
with $\det(\mathcal{H}_{\Lambda}^{B}(\mathbf{h}))\neq 0$.

The matrix $\mathcal{H}^{B^{+}}_{\Lambda}$ is a quasi-Hankel matrix
\cite{mp-jcomplexity-2000}, whose structure is imposed by equality (linear)
constraints on its entries.
If $\mathbb{H}$ is known (i.e. $B\times B \subset R_{\delta_{1},...,\delta_{k}}$, the number of
independent parameters in $\mathcal{H}_{\Lambda}^{B,B^{+}}(\mathbf{h})$ or in
$\mathbb{W}$ is the number  of monomials in $B \times \partial B - R_{\delta_{1},...,\delta_{k}}$.   
By Proposition \ref{prop:dec}, the rank condition is equivalent to the quadratic
relations $\mathbb{J}- \mathbb{W}^{t} \mathbb{H}^{t}\, \mathbb{W}=0$ in these unknowns. 

If $\mathbb{H}$ is not completely known, the number of parameters in $\mathbb{H}$ 
is the number of monomials in $B\times B - R_{\delta_{1},...,\delta_{k}}$. The number of independent parameters in
$\mathcal{H}_{\Lambda}^{B,\partial B}(\mathbf{h})$ or in $\mathbb{W}$ is then $B\times
\partial B - R_{\delta_{1},...,\delta_{k}}$. 

The system \eqref{syst:GH} is composed of linear equations deduced from 
 quasi-Hankel structure, quadratic relations  for the entries in $B\times
\partial B$
and cubic relations  for the entries in $B\times \partial B$ in the unknown
parameters $\mathbf{h}$ and $\mathbf{w}$.

We are going to use explicitly these characterizations in the new algorithm
we propose for minimal tensor decomposition.
%\fi
%%%%%%%%%%%%%%%%%%%%%%%%%%%%%%%%%%%%%%%%%%%%%%%%%%%%%%%%%%%%%%%%%%%%%%
% \begin{proposition}
%   For a generic choice of the parameters $\hb$ we obtain a correct decomposition.
% \end{proposition}
% \begin{proof}
%   We will use the notion and the properties of the $u-$resultant.
% 
%   Assume that we now the rank, $r$.  
%   We consider the ideal $I_{\Lambda}$, which contains polynomials in
%   $(\kk[\hb])[ \xb]$.
%   Consider a generating set of the ideal, i.e.
%   $( f_1, \dots, f_s) = I_{\Lambda}$,
%   and the polynomial system $f_0 = f_1 = \cdots = f_s = 0$,
%   where $f_0 = u_0 + u_1 x_1 + \dots + u_n x_n \in (\ZZ[\bvec{u}])[\bvec{x}]$.
%   Let $\bvec{u} =\{ u_0, u_1, \dots, u_n\}$ is a (new) set of parameters.
%   We compute the so-called $u-$resultant of the system, 
%   or in other words we eliminate the variables $\bvec{x}$.
%   This resultant is a polynomial in $(\ZZ[\hb])[\bvec{u}]$.
%   
%   The system has only simple roots, 
%   or equivalently the ideal $I_{\Lambda}$ is radical (Lemma~\ref{prop:decomp-rank}),
%   if and only if the resultant is a square-free polynomial in $\bvec{u}$,
%   or equivalently its discriminant does not vanish.
%   
%   The discriminant can not be identically zero since the ideal 
%   $I_{\Lambda}$ should be radical, at least in some cases
%   (Lemma~\ref{prop:decomp-rank}).
%   Moreover, the discriminant is a polynomial in $\kk[\hb]$, 
%   thus it becomes zero only in a set of measure zero in $\kk^{|\hb|}$, where
%   $|\hb|$ is the number of parameters.
% \end{proof}

\section{Examples and applications}
%\subsection{Applications}
There exist numerous fields in which decomposing a tensor into a sum of
rank-one terms is useful. These fields range from arithmetic complexity
\cite{BurgCS97} to chemistry \cite{SmilBG04}. One nice application is worth
to be emphasized, namely wireless transmissions \cite{SidiGB00:ieeesp}: one
or several signals are wished to be extracted form noisy measurements,
received on an array of sensors and disturbed by interferences. The approach
is deterministic, which makes the difference compared to approaches based on
data statistics \cite{ComoJ10}.  The array of sensors is composed of $J$
subarrays, each containing $I$ sensors. Subarrays do not need to be disjoint,
but must be deduced from each other by a translation in space. If the
transmission is narrow band and in the far field, then the measurements at
time sample $t$ recorded on sensor $i$ of subarray $j$ take the form:

\centerline{$T(i,j,t) = \sum_{p=1}^r A_{ip} B_{jp} C_{tp}$}

\vspace{1mm}\noindent{}if $r$ waves impinge on the array. Matrices $A$ and $B$ characterize the geometry of the array (subarray and translations), whereas matrix $C$ contains the signals received on the array. 
An example with $(I,J)=(4,4)$ is given in Figure \ref{array4-fig}.
Computing the decomposition of tensor $T$ allows to extract signals of interest as well as interferences, all included in matrix $C$. Radiating sources can also be localized with the help of matrix $A$ if the exact location of sensors of a subarray are known.
Note that this framework applies in  radar, sonar or telecommunications.

\begin{figure}[htb]\begin{center}
\includegraphics{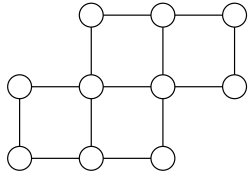}\caption{Array of 10 sensors decomposed into 4 subarrays of 4 sensors each.}\label{array4-fig}
\end{center}\end{figure}
%%%%%%%%%%%%%%%%%%%%%%%%%%%%%%%%%%%%%%%%%%%%%%%%%%%%%%%%%%%%%%%%%%%%%%
%\if \full
\subsection{Best approximation of lower multilinear rank}
By considering a $k$th order tensor as a linear map from one linear space onto the tensor product of the others, one can define the $i$th mode rank, which is nothing else but the rank of that linear operator. Since there are $k$ distinct possibilities to build such a linear operator, one defines a $k$-uplet of ranks $(r_1,\dots r_k)$, called the \emph{multilinear rank} of the $k$th order tensor.
It is known that tensor rank is bounded below by all mode ranks $r_i$:
\begin{equation}\label{lowerBound-eq}
r \ge r_i , \forall 1\le i \le k
\end{equation} 
This inequality gives us  an easily accessible lower bound. 
Let's turn now to an upper bound.
\begin{proposition}\cite{AtkiS79:laa}
The rank of a tensor of order 3 and dimensions $n_1\times n_2\times n_3$, with $n_1\le n_2$, is bounded by
\begin{equation}\label{upperBound-eq}
r \le n_1 + n_2 \lfloor n_3/2\rfloor 
\end{equation}
\end{proposition}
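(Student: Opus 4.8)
I would argue by induction on $n_{3}$, representing $T\in\kk^{n_{1}}\otimes\kk^{n_{2}}\otimes\kk^{n_{3}}$ by its frontal slices $T_{1},\dots,T_{n_{3}}\in\kk^{n_{1}}\otimes\kk^{n_{2}}$, so that $T=\sum_{\ell=1}^{n_{3}}T_{\ell}\otimes e_{\ell}$. Two elementary facts are used throughout: tensor rank is subadditive and invariant under $GL_{n_{1}}\times GL_{n_{2}}\times GL_{n_{3}}$, and a single slice contributes $\rank(T_{\ell}\otimes e_{\ell})=\rank(T_{\ell})\le n_{1}$. The base cases $n_{3}\le 2$ are then immediate: $r(T)\le\sum_{\ell}\rank(T_{\ell})\le 2n_{1}\le n_{1}+n_{2}$ because $n_{1}\le n_{2}$, and $n_{1}+n_{2}\le n_{1}+n_{2}\lfloor n_{3}/2\rfloor$ when $n_{3}=2$.

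For the inductive step $n_{3}\ge 3$, let $V=\langle T_{1},\dots,T_{n_{3}}\rangle\subseteq\kk^{n_{1}}\otimes\kk^{n_{2}}$. If $\dim V<n_{3}$ the slices are dependent, so $T$ lies in $\kk^{n_{1}}\otimes\kk^{n_{2}}\otimes W$ for some $W\subseteq\kk^{n_{3}}$ with $\dim W=\dim V<n_{3}$, and the induction hypothesis applies at that smaller format. So I may assume $\dim V=n_{3}\ge 3$ and invoke the following, which is the heart of the matter.

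\medskip\noindent\textbf{Key Lemma.} \emph{If $W\subseteq\kk^{n_{1}}\otimes\kk^{n_{2}}$ is a subspace with $n_{1}\le n_{2}$ and $\dim W\ge 3$, then $W$ contains a $2$-dimensional subspace $P$ whose associated $n_{1}\times n_{2}\times 2$ tensor has rank at most $n_{2}$.}\medskip

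Granting this, I pick such a $P=\langle S_{1},S_{2}\rangle\subseteq V$, extend to a basis $S_{1},\dots,S_{n_{3}}$ of $V$, and use $GL_{n_{3}}$-invariance to assume $T=\sum_{i=1}^{n_{3}}S_{i}\otimes e_{i}$. Writing $T=(S_{1}\otimes e_{1}+S_{2}\otimes e_{2})+\sum_{i\ge 3}S_{i}\otimes e_{i}$, the first summand has rank $\le n_{2}$ by the Key Lemma (rank is preserved when $\langle e_{1},e_{2}\rangle$ is embedded in $\kk^{n_{3}}$) and the second summand is a tensor of format $n_{1}\times n_{2}\times(n_{3}-2)$, of rank $\le n_{1}+n_{2}\lfloor(n_{3}-2)/2\rfloor$ by induction. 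Subadditivity yields $r(T)\le n_{2}+n_{1}+n_{2}\lfloor(n_{3}-2)/2\rfloor=n_{1}+n_{2}\lfloor n_{3}/2\rfloor$, as desired.

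The remaining task, and the step I expect to be the main obstacle, is the Key Lemma. If $2n_{1}\le n_{2}$ it is trivial, since any pencil $P$ satisfies $r(P)\le 2n_{1}\le n_{2}$; so assume $n_{2}<2n_{1}$. The plan is to choose $M_{0}\in W$ of maximal rank $\rho$ and normalise it to $M_{0}=\bigl(\begin{smallmatrix}I_{\rho}&0\\0&0\end{smallmatrix}\bigr)$ by a $GL_{n_{1}}\times GL_{n_{2}}$ change of basis; maximality of $\rho$, read off from the Schur complement of $I_{\rho}+sA$ in $M_{0}+sM$, forces every $M=\bigl(\begin{smallmatrix}A&B\\C&D\end{smallmatrix}\bigr)\in W$ to satisfy $D=0$ and $CA^{k}B=0$ for all $k\ge 0$. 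When $\rho=n_{1}$ (so $C$ is absent) I would then take $M_{1}\in W$ independent of $M_{0}$ with $A$ diagonalisable; diagonalising $A$ presents $\langle M_{0},M_{1}\rangle$ as $\sum_{i=1}^{n_{1}}\varepsilon_{i}\otimes\varepsilon_{i}\otimes(e_{1}+\lambda_{i}e_{2})$ plus a term supported on the last $n_{2}-n_{1}$ columns, so $r(P)\le n_{1}+(n_{2}-n_{1})=n_{2}$. The genuinely delicate points are: (i) producing some $M_{1}\in W$ whose $A$-block is diagonalisable, which can fail for a poorly chosen $M_{0}$ (take $W=\langle I,N,N^{2}\rangle$ with $N$ a nilpotent Jordan block, where one must instead build the good pencil from the low-rank elements $N,N^{2}$), and (ii) the case $\rho<n_{1}$. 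Both are ultimately consequences of Kronecker's classification of matrix pencils --- equivalently of the known value $r(n_{1}\times n_{2}\times 2)=\min(2n_{1},\,n_{1}+\lfloor n_{2}/2\rfloor)$ --- which I would use to exhibit, inside any $W$ with $\dim W\ge 3$, a pencil whose Kronecker normal form has no block contributing more than its column count to the rank; that pencil-theoretic input would be the only non-elementary ingredient in the write-up.
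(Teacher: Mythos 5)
The paper offers no proof of this proposition at all---it is quoted directly from Atkinson--Stephens \cite{AtkiS79:laa}---so there is no internal argument to compare yours against. The outer layer of your plan is sound and is indeed the natural shape of argument for a bound of this form: reduction to a third mode in which the tensor is concise, invariance and subadditivity, the base cases, and the arithmetic $n_2+n_1+n_2\lfloor (n_3-2)/2\rfloor=n_1+n_2\lfloor n_3/2\rfloor$ all check out.

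The genuine gap is exactly where you place ``the heart of the matter'': the Key Lemma is not proved, and the fallback you offer for it is not valid. The maximal rank of an $n_1\times n_2\times 2$ tensor, $\min\bigl(2n_1,\,n_1+\lfloor n_2/2\rfloor\bigr)$, only settles the lemma when $n_2\ge 2n_1-1$; in the remaining range $n_1\le n_2\le 2n_1-2$ (in particular the square case, where the maximum is $\lfloor 3n/2\rfloor>n$) that value exceeds $n_2$, so quoting it cannot ``equivalently'' produce a pencil of rank $\le n_2$ inside a \emph{prescribed} $3$-dimensional space $W$. What is needed is a statement about $3$-dimensional matrix spaces---that the defective pencils cannot fill all of $\Proj(W)$, or an explicit construction of a pencil whose Kronecker form has no minimal-index or non-semisimple Jordan contribution pushing its rank above $n_2$---and that is essentially the content of the cited paper, not a corollary of the max-rank formula. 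Your own sketch concedes the difficulty: the ``maximal-rank element plus diagonalisable partner'' scheme already fails for $W=\langle I,N,N^2\rangle$, the case $\rho<n_1$ is untreated, and both are deferred to ``Kronecker's classification'' without a construction (spaces consisting entirely of nilpotent or simultaneously rank-deficient matrices are the cases that must be handled). Until such a pencil is exhibited in every $W$ with $\dim W\ge 3$, the induction has nothing to strip off and the proof is incomplete.
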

This bound on maximal rank has not been proved to be always reached, and it is likely to be quite loose for large values of $n_i$. Nevertheless, it is sufficient for our reasoning.

\smallskip

There are two issues to address.
First, the algorithm we have proposed is not usable in large dimensions (e.g. significantly larger than 10). The idea is then to reduce dimensions $n_i$ down to $r_i$ before executing the algorithm, if necessary. 
Second, another problem in practice is the presence of measurement errors or modeling inaccuracies, which increase the tensor rank to its generic value. We do not know how to reduce tensor rank back to its exact value. The practical solution is then to compute the best approximate of lower multilinear rank $(r_1,\dots r_k)$, as explained in \cite{ComoLA09:jchemo}. This best approximate always exists, and inequality (\ref{upperBound-eq}) shows that reducing dimensions will indirectly reduce tensor rank.
To compute it, it suffices to minimize $||T - (U^{(1)},U^{(2)},U^{(3)})\cdot C||$ with respect to the three matrices $U^{(i)}$, each of size $n_i\times r_i$, under the constraint $U^{(1)\sf H}U^{(1)}=I$. If properly initialized by a truncated HOSVD, a few iterations of any iterative algorithm will do it \cite{DelaDV00:simax}. The tensor of reduced dimensions is then given by $C=(U^{(1)\sf H},U^{(2)\sf H},U^{(3)\sf H})\cdot T$.

\subsection{Number of solutions}
In the above mentioned applications, it is necessary to have either a unique solution, or a finite set of solutions from which the most realistic one can be picked up. For this reason, it is convenient to make sure that the tensor rank is not too large, as pointed out by the following propositions.

\begin{proposition}\cite{ChiaC01:tams}
A generic symmetric tensor of order $k\ge3$ and rank $r$ admits a finite number of decompositions into a sum of rank one terms if $r<r_E(k,n)$, where:
\begin{equation}
r_E^{}(k,n) = \left\lceil\frac{\binom{n+k-1}{d}}{n}\right\rceil
\end{equation}
Rank $r_E^{}$ is usually referred to as the expected rank of order $k$ and dimension $n$. 
\end{proposition}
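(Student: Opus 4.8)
The plan is to reinterpret ``number of decompositions'' as the size of a generic fibre of a projection and then to prove that this projection is generically finite by a dimension count. Write $k$ for the order (equivalently the degree) and let $E$ be a vector space of dimension $n$, so that symmetric tensors of order $k$ form $S^{k}(E)$, of dimension $N=\binom{n+k-1}{k}$, and the rank-one tensors are parametrised by the Veronese variety $X=\SV(S^{k}(E))\subset\PP^{N-1}$, of dimension $n-1$. First I would introduce the \emph{abstract $r$-secant variety} $\mathrm{Ab}_{r}$, namely the closure of the locus of tuples $(\overline{\bvec{P}},\overline{\bvec{P}}_{1},\ldots,\overline{\bvec{P}}_{r})$ with $\overline{\bvec{P}}_{i}\in X$ and $\overline{\bvec{P}}\in\langle\overline{\bvec{P}}_{1},\ldots,\overline{\bvec{P}}_{r}\rangle$, together with the projection $\pi\colon\mathrm{Ab}_{r}\to\PP^{N-1}$ onto the first coordinate. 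Then $\mathrm{Ab}_{r}$ is irreducible (it fibres over $X^{r}$ with fibres open in a $\PP^{r-1}$) of dimension $r(n-1)+(r-1)=rn-1$, and $\overline{\pi(\mathrm{Ab}_{r})}=X_{r}$. Since $\dim X_{r-1}\le(r-1)n-1<rn-1$, a generic rank-$r$ symmetric tensor $T$ is a generic point of $X_{r}$, and its length-$r$ rank-one decompositions are exactly the points of $\pi^{-1}(\overline{T})$ lying over the open locus where the $\overline{\bvec{P}}_{i}$ are distinct and in general linear position. Hence ``$T$ has finitely many decompositions'' is equivalent to ``$\pi$ is generically finite'', i.e.\ to $\dim X_{r}=\dim\mathrm{Ab}_{r}=rn-1$.

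Since $\pi$ is dominant onto $X_{r}$ we always have $\dim X_{r}\le rn-1$, so only the reverse inequality, the non-defectivity $\dim X_{r}=rn-1$, has to be shown. Here I would first note that the hypothesis $r<r_{E}(k,n)=\lceil N/n\rceil$ forces $rn<N$, so the expected dimension $rn-1$ is at most $N-2$ and there is indeed room for $X_{r}$ to be proper in $\PP^{N-1}$. To compute $\dim X_{r}$ I would invoke Terracini's lemma: at a general point $\overline{\bvec{P}}=\sum_{i}\lambda_{i}\overline{\bvec{P}}_{i}$ of $X_{r}$ one has $T_{\overline{\bvec{P}}}X_{r}=\langle T_{\overline{\bvec{P}}_{1}}X,\ldots,T_{\overline{\bvec{P}}_{r}}X\rangle$, a linear space whose dimension is $N-1-h^{0}(\PP(E),\mathcal{I}_{2Z}(k))$, where $Z$ is a general set of $r$ points of $\PP(E)$ and $2Z$ the associated scheme of double points (of length $rn$). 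By the Alexander--Hirschowitz theorem, for $k\ge3$ the scheme $2Z$ imposes the expected number of conditions, i.e.\ $h^{0}(\mathcal{I}_{2Z}(k))=\max(0,\,N-rn)$, unless $(k,n,r)$ is one of the finitely many known exceptional triples; a direct check shows that every one of those triples satisfies $r=r_{E}(k,n)$ and so is excluded by the hypothesis $r<r_{E}$. Therefore $h^{0}(\mathcal{I}_{2Z}(k))=N-rn$ (using $rn<N$) and $\dim X_{r}=N-1-(N-rn)=rn-1$, as needed.

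Combining the two parts, $\pi$ is a dominant morphism between irreducible varieties of the same dimension $rn-1$, so its generic fibre is finite; equivalently, the same computation shows the differential $d\pi$ at a general point of $\mathrm{Ab}_{r}$ has image $\langle T_{\overline{\bvec{P}}_{1}}X,\ldots,T_{\overline{\bvec{P}}_{r}}X\rangle$ of dimension $rn-1=\dim\mathrm{Ab}_{r}$, hence is injective. This last point is also where $k\ge3$ is indispensable: for $k=2$ the $r$ tangent spaces to $\nu_{2}(\PP(E))$ fail to be in general linear position as soon as $r\ge2$ (this is precisely the $d=2$ exception in Alexander--Hirschowitz), the fibres of $\pi$ become positive-dimensional, and a generic quadratic form of rank $r$ carries a continuum of decompositions. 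The main obstacle in the whole argument is the non-defectivity input, i.e.\ the Alexander--Hirschowitz evaluation of $h^{0}(\mathcal{I}_{2Z}(k))$ in the range $r<r_{E}$; everything else is dimension bookkeeping together with Terracini's lemma.
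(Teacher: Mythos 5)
Your argument is correct, but it cannot be ``the same as the paper's'': the paper offers no proof of this proposition at all, it simply imports it from the cited reference \cite{ChiaC01:tams}. What you supply is the standard self-contained route: identify decompositions with fibres of the projection $\pi$ from the abstract secant variety (irreducible of dimension $rn-1$), reduce finiteness of the generic fibre to non-defectivity $\dim X_r=rn-1$, compute the tangent space via Terracini as $N-1-h^0(\mathcal{I}_{2Z}(k))$ with $2Z$ a general double-point scheme of length $rn$, and then invoke Alexander--Hirschowitz. The crux is your claim that every exceptional triple of Alexander--Hirschowitz in degree $k\ge 3$ has $r=r_E$, and this checks out: the exceptions are $(k,n,r)=(3,5,7),(4,3,5),(4,4,9),(4,5,14)$ in the paper's convention ($n=\dim E$), and in each case $\lceil\binom{n+k-1}{k}/n\rceil$ equals the exceptional $r$, so the hypothesis $r<r_E$ excludes them; together with $rn<N$ (which follows from $r<\lceil N/n\rceil$) this gives $h^0(\mathcal{I}_{2Z}(k))=N-rn$ and hence equal dimensions, so $\pi$ is generically finite. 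Two remarks on the comparison: (i) the cited Chiantini--Ciliberto paper is really about weak defectivity and identifiability (uniqueness) statements, whereas the bare finiteness claim in the subgeneric range $r<r_E$ is, as your argument makes explicit, equivalent to non-defectivity and thus rests on Alexander--Hirschowitz, a deep theorem you are using as a black box -- that is a legitimate but heavy input, and it is worth saying so; (ii) minor bookkeeping points: the binomial in the statement should read $\binom{n+k-1}{k}$ (the ``$d$'' is a typo), ``generic rank-$r$ tensor equals generic point of $X_r$'' should logically come after the dimension computation (you need $\dim X_r>\dim X_{r-1}$, which your non-defectivity supplies), and only the implication ``finite fibre $\Rightarrow$ finitely many length-$r$ decompositions'' is needed, so the claimed equivalence, while true, need not be argued. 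With those caveats your proposal is a sound proof of the proposition as stated.
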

Note that this result is true for generic tensors of rank $r$, which means that there exists a set of exceptions, of null measure.

This proposition has not yet been entirely extended to unconstrained tensors, which we are interested in. However, some partial results are available in the literature \cite{AboOP09:tams}, and the following conjecture is generally admitted
\begin{conjecture}
A generic tensor of order $k\ge3$ and rank $r$ admits a finite number of decompositions into a sum of rank one terms if $r<r_E(k,\mathbf{n})$, where:
\begin{equation}
r_E^{}(k,\mathbf{n}) = \left\lceil\frac{\prod_{i=1}^k n_1}{1-k+\sum_{i=1}^k n_i}\right\rceil
\end{equation}
where $n_i$ denote the dimensions, $1\le i\le k$ 
\end{conjecture}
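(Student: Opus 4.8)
The plan is to recast the statement about decompositions as one about dimensions of secant varieties of Segre varieties. A generic tensor of $\Proj(E_{1}\otimes\cdots\otimes E_{k})$ of rank $r$ is a generic point of the $r$-th secant variety $X_{r}$ of the Segre variety $X=s_{k}(\Proj(E_{1})\times\cdots\times\Proj(E_{k}))$, where $\dim X=\sum_{i=1}^{k}(n_{i}-1)$ and the ambient projective space has dimension $\prod_{i=1}^{k}n_{i}-1=:N-1$. Introduce the abstract $r$-secant variety $Y=\{(P_{1},\dots,P_{r},\overline{v}):P_{i}\in X,\ \overline{v}\in\langle P_{1},\dots,P_{r}\rangle\}$, which has generic fibre $\Proj^{r-1}$ over $X^{r}$, so that $\dim Y=r\dim X+(r-1)=r(1-k+\sum_{i}n_{i})-1$, together with the projection $\pi:Y\to\Proj^{N-1}$ whose image closure is $X_{r}$. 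Since a generic $\overline{v}\in X_{r}$ lies in the open locus $X_{r}^{0}$ and its $r$ supporting points are then linearly independent, the length-$r$ decompositions of $\overline{v}$ are in bijection, up to reordering, with the points of the fibre $\pi^{-1}(\overline{v})$. Hence ``finitely many decompositions'' is equivalent to ``$\pi$ has finite generic fibre'', i.e. to $\dim X_{r}=\dim Y$.

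Next observe that $r<r_{E}(k,\mathbf{n})$ is exactly the numerical condition $r\,(1-k+\sum_{i}n_{i})<N$, i.e. $\dim Y<N-1$; so $\pi$ is not dominant and the expected dimension of $X_{r}$ equals $\dim Y$. Consequently the conjecture is equivalent to the assertion that for every $r<r_{E}(k,\mathbf{n})$ the secant variety $X_{r}$ is \emph{non-defective}. By Terracini's lemma this amounts to showing that for $r$ generic points $P_{1},\dots,P_{r}\in X$ the span $\langle T_{P_{1}}X,\dots,T_{P_{r}}X\rangle$ has the expected dimension $\min\{r(\dim X+1),N\}=r(1-k+\sum_{i}n_{i})$; equivalently, that the Jacobian of $\pi$ has maximal rank at one (hence a generic) point -- a condition that, in the formulation of Section~2, says that a generic truncated Hankel/catalecticant matrix $\mathbb{H}_{\Lambda}$ admits only finitely many flat extensions of the prescribed rank.

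To prove non-defectivity I would induct on the format $(n_{1},\dots,n_{k})$ via a degeneration of Horace / Terracini-induction type: specialise a carefully chosen number of the points onto a sub-Segre variety $s_{k}(\Proj(E_{1}')\times\cdots\times\Proj(E_{k}'))$ with $E_{i}'\subseteq E_{i}$, split the tangential dimension count into a ``trace'' part living in the span of the sub-Segre and a ``residual'' part modulo it, apply the inductive hypothesis to each of the two smaller secant problems, and use upper-semicontinuity of the rank of the tangential matrix to lift the expected-dimension bound from the special configuration back to the generic one. The base of the induction consists of the small unbalanced formats together with a finite list of low-dimensional ones; these are settled directly by evaluating the rank of the relevant Jacobian (a catalecticant/Hankel matrix as above) at a single explicit numerical point, which suffices by semicontinuity.

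The main obstacle is that the conjecture is \emph{not true as literally stated}: there genuinely exist defective Segre products with $k\ge3$ factors -- the unbalanced formats, and sporadic small ones such as $3\times4\times4$ and $2\times2\times2\times2$ -- and for these $\pi$ has positive-dimensional generic fibre even though $r<r_{E}$; likewise the symmetric Proposition above must exclude the Alexander--Hirschowitz exceptions. A correct theorem therefore has to pin down this list of exceptions, proving it finite and explicit -- which is essentially the Abo--Ottaviani--Peterson non-defectivity conjecture \cite{AboOP09:tams}, still open in general -- and then carry out the induction away from it. The technical heart is making the degeneration ``balanced'': choosing how many points to specialise and onto which sub-Segre so that neither the trace nor the residual system falls into an exceptional case, and verifying that the ensuing numerical inequalities close for every non-exceptional format. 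It is exactly this bookkeeping that has resisted a complete proof, which is why the statement is given as a conjecture.
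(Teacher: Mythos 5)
The paper does not prove this statement at all: it is deliberately presented as a conjecture (``generally admitted''), with only partial results cited from \cite{AboOP09:tams}, so there is no proof of record to compare yours against. Judged on its own terms, your proposal is an honest strategy sketch rather than a proof, and you correctly identify both the right reformulation and the reason no proof exists. The reduction is sound: $r<r_E(k,\mathbf{n})$ is exactly $r(1-k+\sum_i n_i)<\prod_i n_i$, so the abstract secant variety $Y$ has dimension below that of the ambient space, and finiteness of decompositions of a generic point of $\SV_r$ is equivalent, via the incidence projection $\pi$ and Terracini's lemma, to non-defectivity of the $r$-th secant variety of the Segre variety. Your caveat is also correct and worth stressing: as literally stated the conjecture is false, e.g.\ for the $2\times2\times2\times2$ format one has $r_E=4$ while $\SV_3$ is defective, so a generic rank-$3$ tensor has a positive-dimensional family of decompositions; the unbalanced formats and the $3\times4\times4$ case behave similarly, so any true statement must carry an explicit exception list (just as the symmetric proposition implicitly rests on Alexander--Hirschowitz).

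The genuine gap is that the core of your argument -- the Horace/Terracini-type induction establishing non-defectivity for all non-exceptional formats, together with a proof that the exceptional list is finite and exactly as claimed -- is only described, not carried out, and indeed cannot currently be carried out: this is precisely the Abo--Ottaviani--Peterson non-defectivity programme, which remains open in general. The delicate points you name (choosing how many points to specialise onto which sub-Segre so that neither the trace nor the residual system degenerates into an exceptional case, and closing the numerical bookkeeping uniformly in the format) are exactly where known inductive proofs stop short of full generality. So your text should be read as a correct explanation of why the statement is a conjecture and what a proof would require, not as a proof; in that respect it is consistent with, and somewhat more informative than, the paper's own treatment.
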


On the other hand, a sufficient condition for uniqueness has been proposed by Kruskal \cite{Krus77:laa}, but the bound is more restrictive:
\begin{proposition}\cite{Krus77:laa}
A tensor of order $k\ge3$ and rank $r$ admits a finite number of decompositions into a sum of rank one terms if:
\begin{equation}
r \le \frac{\sum_{i=1}^k \kappa_i}{2}-1
\end{equation}
where $\kappa_i$ denote the so-called Kruskal's ranks of loading matrices, which generically equal the dimensions $n_i$ if the rank $r$  is larger than the latter.
\end{proposition}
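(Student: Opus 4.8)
Write $T=\sum_{p=1}^{r}a^{(1)}_{p}\otimes\cdots\otimes a^{(k)}_{p}$, gather the $a^{(i)}_{p}$ as the columns of a loading matrix $A^{(i)}$, and recall that $\kappa_{i}=\kappa(A^{(i)})$ is the largest $j$ such that every $j$ columns of $A^{(i)}$ are linearly independent. I would prove the stronger assertion that the decomposition is \emph{unique} up to a common permutation of the index $p$ together with rescalings $a^{(i)}_{p}\mapsto\lambda^{(i)}_{p}a^{(i)}_{p}$ with $\prod_{i}\lambda^{(i)}_{p}=1$; this clearly implies finiteness. The argument has three stages: reduce from order $k$ to order $3$, settle the order-$3$ case by a contraction-plus-permutation argument, and reconcile the resulting permutations.

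\textbf{Reduction to $k=3$.} Partition $\{1,\dots,k\}=I_{1}\sqcup I_{2}\sqcup I_{3}$ and replace $T$ by the order-$3$ tensor obtained by Segre-grouping the factors in each block; its three loading matrices are the Khatri--Rao (columnwise Kronecker) products $A_{\ell}=\bigodot_{i\in I_{\ell}}A^{(i)}$. Since the Segre map is injective, an $r$-term rank-one decomposition of the grouped tensor is exactly an $r$-term decomposition of $T$, so uniqueness transfers back. The arithmetic input is the sub-additivity $\kappa(A\odot B)\ge\min(\kappa(A)+\kappa(B)-1,\,r)$, which iterates to $\kappa(A_{\ell})\ge\min\bigl(\sum_{i\in I_{\ell}}\kappa_{i}-|I_{\ell}|+1,\;r\bigr)$; one then has to choose the partition so that the three grouped Kruskal ranks still obey the order-$3$ Kruskal inequality, and this is where the hypothesis $2r+2\le\sum_{i}\kappa_{i}$ enters (for $k=3$ the partition is trivial and the hypothesis is used verbatim).

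\textbf{The order-$3$ core.} Let $T=\sum_{p=1}^{r}a_{p}\otimes b_{p}\otimes c_{p}=\sum_{p=1}^{r}\hat a_{p}\otimes\hat b_{p}\otimes\hat c_{p}$, both minimal. From $2r+2\le\kappa_{A}+\kappa_{B}+\kappa_{C}$ and $\kappa_{C}\le r$ one gets $\kappa_{A}+\kappa_{B}\ge r+1$, so the $r$ matrices $a_{p}b_{p}^{\top}$ (and likewise $\hat a_{p}\hat b_{p}^{\top}$) are linearly independent. Now contract the third mode: for every $w$ in that mode's coefficient space, $T_{w}:=A\,\mathrm{diag}(C^{\top}w)\,B^{\top}=\hat A\,\mathrm{diag}(\hat C^{\top}w)\,\hat B^{\top}$. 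Bounding $\mathrm{rank}(T_{w})$ below by $r-\omega(C^{\top}w)$ (using $\kappa_{A},\kappa_{B}$, where $\omega(\cdot)$ counts zero entries) and above by $r-\omega(\hat C^{\top}w)$ yields $\omega(\hat C^{\top}w)\ge\omega(C^{\top}w)$ for every $w$ with $\omega(C^{\top}w)$ large enough. Kruskal's Permutation Lemma then forces $\hat C=C\,\Pi\,\Delta$ for a permutation matrix $\Pi$ and an invertible diagonal $\Delta$, and by the symmetry of the hypothesis in the three modes the same holds for $\hat A$ and $\hat B$.

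\textbf{Reconciliation; the main obstacle.} Substituting these relations into $\sum_{p}\hat a_{p}\otimes\hat b_{p}\otimes\hat c_{p}=\sum_{p}a_{p}\otimes b_{p}\otimes c_{p}$ and invoking the linear independence of the $r$ rank-one tensors $a_{p}\otimes b_{p}\otimes c_{p}$ (again from the $\kappa$-bounds) pins the three permutations down to be equal and the product of the three diagonal rescalings to be the identity, i.e.\ exactly the trivial ambiguity. The only genuinely hard ingredient is the Permutation Lemma itself: its proof is an intricate induction on $r-\kappa_{C}$ interleaving zero-pattern counting with rank estimates, whereas everything else is linear-algebra bookkeeping and the sub-additivity of Kruskal rank. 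A reader preferring a self-contained treatment could instead follow Rhodes's later proof, which replaces the Permutation Lemma by a rank computation on compound matrices, or the Sidiropoulos--Bro analysis of the order-$k$ case, which also explains why for $k>3$ the sharp constant is $2r+k-1$ rather than $2r+2$.
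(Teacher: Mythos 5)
The paper itself offers no proof of this proposition: it is stated as a quotation of Kruskal \cite{Krus77:laa} (and, implicitly for $k>3$, of its multiway extension in \cite{SidiGB00:ieeesp}, with the Permutation Lemma treated in \cite{JianS04:ieeesp}), so there is no internal argument to compare yours with. Your sketch follows the standard route for that cited result—one-mode contractions, rank bounds in terms of Kruskal ranks, the Permutation Lemma, and a Khatri--Rao grouping to reduce $k>3$ to $k=3$—and for $k=3$ it is essentially Kruskal's theorem verbatim, with the Permutation Lemma legitimately invoked rather than proved (that lemma is the substance of Kruskal's paper, so citing it is consistent with how the paper treats the whole proposition).

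There is, however, a genuine gap at the order-reduction step, i.e.\ exactly for the generality ($k\ge 3$) claimed in the statement. The bound $\kappa(A\odot B)\ge\min(\kappa(A)+\kappa(B)-1,\,r)$ loses one unit for every extra factor in a block, so grouping $k$ modes into three costs $k-3$ overall: from the stated hypothesis $2r+2\le\sum_{i}\kappa_{i}$ you can only conclude $\kappa(A_{1})+\kappa(A_{2})+\kappa(A_{3})\ge\sum_{i}\kappa_{i}-(k-3)\ge 2r+2-(k-3)$, which for $k>3$ falls short of the order-3 condition $2r+2\le\kappa(A_{1})+\kappa(A_{2})+\kappa(A_{3})$ that your core argument needs, and no choice of the partition $I_{1}\sqcup I_{2}\sqcup I_{3}$ repairs this in general. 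This is precisely why the known $k$-way theorem requires $2r+(k-1)\le\sum_{i}\kappa_{i}$, as your own closing sentence concedes—so the proposal is internally inconsistent as a proof of the proposition as printed: it establishes the case $k=3$, but for $k>3$ it proves neither uniqueness nor even finiteness under the constant $2r+2$ (the blame lies partly with the paper's loose paraphrase of Kruskal, but your write-up should say explicitly that the constant must be strengthened to $2r+k-1$ for $k>3$, or restrict to $k=3$). A smaller point: in the order-3 core the lower bound $\rank(A\,\mathrm{diag}(C^{\top}w)\,B^{\top})\ge r-\omega(C^{\top}w)$ is not unconditional; it holds only when the number of nonzero entries of $C^{\top}w$ does not exceed the relevant Kruskal ranks, so the inequality $\omega(\hat C^{\top}w)\ge\omega(C^{\top}w)$ has to be derived on the restricted set of $w$ demanded by the Permutation Lemma, which your hedge ``large enough'' gestures at but does not pin down.
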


\subsection{Computer results}
%\fi
%%%%%%%%%%%%%%%%%%%%%%%%%%%%%%%%%%%%%%%%%%%%%%%%%%%%%%%%%%%%%%%%%%%%%%
%\if 0
%\paragraph{Example}
If we consider a $4\times4\times7$ unconstrained tensor, it has an expected rank equal to 9, whereas Kruskal's bound generically equals 6.
So it is interesting to consider a tensor with such dimensions but with rank $6<r<9$. In such conditions, we expect that there are almost surely a finite number of solutions. 
This tensor would correspond to measurements received on the array depicted in Figure \ref{array4-fig}, if $7$ time samples are recorded.  In \cite{Chiantini:2011fk} L. Chiantini and G. Ottaviani claim that a computer check shows that for a generic $4\times4\times7$ tensor of rank $7$, uniqueness of the decomposition holds.

\smallskip

Our computer results have been obtained with $4\times4\times7$ tensors of rank 7 randomly drawn according to a continuous probability distribution. 

First we consider a $4\times4\times4$ tensor whose affine representation is given by:\\
$
T:=4+7\,a_1+8\,a_2+9\,a_3+5\,b_1-2\,b_2+11\,b_3+6\,c_1+8\,c_2+6\,c_3+21\,a_1\,b_1+28\,a_2\,b_1+11\,a_3\,b_1-14\,a_1\,b_2-21\,a_2\,b_2-10\,a_3\,b_2+48\,a_1\,b_3+65\,a_2\,b_3+28\,a_3\,b_3+26\,a_1\,c_1+35\,a_2\,c_1+14\,a_3\,c_1+18\,b_1\,c_1-10\,b_2\,c_1+40\,b_3\,c_1+36\,a_1\,c_2+48\,a_2\,c_2+18\,a_3\,c_2+26\,b_1\,c_2-9\,b_2\,c_2+55\,b_3\,c_2+38\,a_1\,c_3+53\,a_2\,c_3+14\,a_3\,c_3+26\,b_1\,c_3-16\,b_2\,c_3+58\,b_3\,c_3+68\,a_1\,b_1\,c_1+91\,a_2\,b_1\,c_1+48\,a_3\,b_1\,c_1-72\,a_1\,b_2\,c_1-105\,a_2\,b_2\,c_1-36\,a_3\,b_2\,c_1+172\,a_1\,b_3\,c_1+235\,a_2\,b_3\,c_1+112\,a_3\,b_3\,c_1+90\,a_1\,b_1\,c_2+118\,a_2\,b_1\,c_2+68\,a_3\,b_1\,c_2-85\,a_1\,b_2\,c_2-127\,a_2\,b_2\,c_2-37\,a_3\,b_2\,c_2+223\,a_1\,b_3\,c_2+301\,a_2\,b_3\,c_2+151\,a_3\,b_3\,c_2+96\,a_1\,b_1\,c_3+129\,a_2\,b_1\,c_3+72\,a_3\,b_1\,c_3-114\,a_1\,b_2\,c_3-165\,a_2\,b_2\,c_3-54\,a_3\,b_2\,c_3+250\,a_1\,b_3\,c_3+343\,a_2\,b_3\,c_3+166\,a_3\,b_3\,c_3.$\\

If we consider $B':=(1,b_1,b_2,b_3)$ and $B:= (1,a_1,a_2,a_3)$, the corresponding matrix $\mathbb{H}_{\Lambda}^{B',B}$ is equal to
$$
\mathbb{H}_{\Lambda}^{B',B} = \left(
    \begin{array}{cccc}
      4 & 7 & 8 & 9\\
      5 & 21 & 28 & 11\\
      -2 & -14 & -21 & -10\\
      11 & 48 & 65 & 28\\
    \end{array}
  \right)
$$
and is invertible. Moreover, the transposed operators of multiplication by the variables $c_1,c_2,c_3$ are known:
$$
^t \mathbb{M}_{c_1}^{B}= \left(
    \begin{array}{cccc}
      0 & 11/6 & -2/3 & -1/6\\
      -2 & -41/6 & 20/3 & 19/6\\
      -2 & -85/6 & 37/3 & 29/6\\
      -2 & 5/2 & 0 & 1/2\\
    \end{array}
  \right)
$$

$$
^t \mathbb{M}_{c_2}^{B}= \left(
    \begin{array}{cccc}
      -2 & 23/3 & -13/3 & -1/3\\
      -6 & 1/3 & 7/3 & 13/3\\
      -6 & -28/3 & 29/3 & 20/3\\
      -6 & 14 & -7 & 0\\
    \end{array}
  \right)
$$

$$
^t \mathbb{M}_{c_3}^{B}= \left(
    \begin{array}{cccc}
      0 & 3/2 & 0 & -1/2\\
      -2 & -33/2 & 14 & 11/2\\
      -2 & -57/2 & 23 & 17/2\\
      -2 & 3/2 & 2 & -1/2\\
    \end{array}
  \right)
$$
whose eigenvalues are respectively $(-1,2,4,1)$, $(-2,4,5,1)$ and $(-3,2,6,1)$. The corresponding common eigenvectors are:
$$
v_1= \left(
    \begin{array}{c}
      1 \\
      -1\\
      -2\\
      3\\
    \end{array}
  \right),
v_2= \left(
    \begin{array}{c}
      1 \\
      2\\
      2\\
      2\\
    \end{array}
  \right),
v_3= \left(
    \begin{array}{c}
      1\\
      5\\
      7\\
      3\\
    \end{array}
  \right),
v_4= \left(
    \begin{array}{c}
      1 \\
      1\\
      1\\
      1\\
    \end{array}
  \right).
$$
We deduce that the coordinates $(a_1,a_2,a_3,b_1,b_2,b_3,c_1,c_2,c_3)$ of
the 4 points of evaluation are:
$$
\zeta_1:= \left(
    \begin{array}{c}
      -1 \\
      -2\\
      3\\
      ?\\
      ?\\
      ?\\
      -1\\
      -2\\
      -3\\
    \end{array}
  \right),
 \zeta_2:= \left(
    \begin{array}{c}
      2 \\
      2\\
      2\\
      ?\\
      ?\\
      ?\\
      2\\
      4\\
      2\\
    \end{array}
  \right),
\zeta_3:= \left(
    \begin{array}{c}
      5 \\
      7\\
      3\\
      ?\\
      ?\\
      ?\\
      4\\
      5\\
      6\\
    \end{array}
  \right),
\zeta_4:= \left(
    \begin{array}{c}
      1 \\
      1\\
      1\\
      ?\\
      ?\\
      ?\\
      1\\
      1\\
      1\\
    \end{array}
  \right),
$$
Then, computing the same way the operators of multiplication $^t \mathbb{M}_{c_1}^{B'},^t \mathbb{M}_{c_2}^{B'},^t \mathbb{M}_{c_3}^{B'}$ and their common eigenvectors, we deduce:
$$
\zeta_1= \left(
    \begin{array}{c}
      -1 \\
      -2\\
      3\\
      -1\\
      -1\\
      -1\\
      -1\\
      -2\\
      -3\\
    \end{array}
  \right),
 \zeta_2:= \left(
    \begin{array}{c}
      2 \\
      2\\
      2\\
      2\\
      2\\
      3\\
      2\\
      4\\
      2\\
    \end{array}
  \right),
\zeta_3= \left(
    \begin{array}{c}
      5 \\
      7\\
      3\\
      3\\
      -4\\
      8\\
      4\\
      5\\
      6\\
    \end{array}
  \right),
\zeta_4= \left(
    \begin{array}{c}
      1 \\
      1\\
      1\\
      1\\
      1\\
      1\\
      1\\
      1\\
      1\\
    \end{array}
  \right).
$$
Finally, we have to solve the following linear system in $(\gamma_1,\gamma_2,\gamma_3,\gamma_4)$:\\
\ \\

$T = \gamma_1 \,(1+a_1+a_2+a_3)\,(1+b_1+b_2+b_3)\,(1+c_1+c_2+c_3)\\
+\gamma_2\,(1-a_1-2\,a_2+3\,a_3)\,(1-b_1-b_2-b_3)\,(1-c_1-2\,c_2-3\,c_3)\\
+\gamma_3\,(1+2\,a_1+2\,a_2+2\,a_3)\,(1+2\,b_1+2\,b_2+3\,b_3)\,(1+2\,c_1+4\,c_2+2\,c_3) \\
+\gamma_4\,(1+5\,a_1+7\,a_2+3\,a_3)\,(1+3\,b_1-4\,b_2+8\,b_3)\,(1+4\,c_1+5\,c_2+6\,c_3)$,
\ \\
We get $\gamma_1=\gamma_2=\gamma_3=\gamma_4=1$.\\
%\fi
%%%%%%%%%%%%%%%%%%%%%%%%%%%%%%%%%%%%%%%%%%%%%%%%%%%%%%%%%%%%%%%%%%%%%%
%\vspace{-0.75cm}
We consider such an example with $6$ time samples, that is an
element of $\RR^{4}\otimes\RR^{4}\otimes \RR^{6}$:
$T:=${ $1046\,a_1\,b_1\,c_1+959\,a_1\,b_1\,c_2+660\,a_1\,b_1\,c_3+866\,a_1\,b_1\,c_4+952\,a_1\,b_1\,c_5-1318\,a_1\,b_2\,c_1-1222\,a_1\,b_2\,c_2-906\,a_1\,b_2\,c_3-1165\,a_1\,b_2\,c_4-1184\,a_1\,b_2\,c_5-153\,a_1\,b_3\,c_1+52\,a_1\,b_3\,c_2+353\,a_1\,b_3\,c_3+354\,a_1\,b_3\,c_4+585\,a_1\,b_3\,c_5+852\,a_2\,b_1\,c_1+833\,a_2\,b_1\,c_2+718\,a_2\,b_1\,c_3+903\,a_2\,b_1\,c_4+828\,a_2\,b_1\,c_5-1068\,a_2\,b_2\,c_1-1060\,a_2\,b_2\,c_2-992\,a_2\,b_2\,c_3-1224\,a_2\,b_2\,c_4-1026\,a_2\,b_2\,c_5+256\,a_2\,b_3\,c_1+468\,a_2\,b_3\,c_2+668\,a_2\,b_3\,c_3+748\,a_2\,b_3\,c_4+1198\,a_2\,b_3\,c_5-614\,a_3\,b_1\,c_1-495\,a_3\,b_1\,c_2-276\,a_3\,b_1\,c_3-392\,a_3\,b_1\,c_4-168\,a_3\,b_1\,c_5+664\,a_3\,b_2\,c_1+525\,a_3\,b_2\,c_2+336\,a_3\,b_2\,c_3+472\,a_3\,b_2\,c_4+63\,a_3\,b_2\,c_5+713\,a_3\,b_3\,c_1+737\,a_3\,b_3\,c_2+791\,a_3\,b_3\,c_3+965\,a_3\,b_3\,c_4+674\,a_3\,b_3\,c_5-95\,a_1\,b_1+88\,a_1\,b_2+193\,a_1\,b_3+320\,a_1\,c_1+285\,a_1\,c_2+134\,a_1\,c_3+188\,a_1\,c_4+382\,a_1\,c_5-29\,a_2\,b_1-2\,a_2\,b_2+198\,a_2\,b_3+292\,a_2\,c_1+269\,a_2\,c_2+138\,a_2\,c_3+187\,a_2\,c_4+406\,a_2\,c_5+119\,a_3\,b_1-139\,a_3\,b_2+20\,a_3\,b_3-222\,a_3\,c_1-160\,a_3\,c_2+32\,a_3\,c_3+9\,a_3\,c_4-229\,a_3\,c_5+122\,b_1\,c_1+119\,b_1\,c_2+112\,b_1\,c_3+140\,b_1\,c_4+108\,b_1\,c_5-160\,b_2\,c_1-163\,b_2\,c_2-176\,b_2\,c_3-214\,b_2\,c_4-117\,b_2\,c_5+31\,b_3\,c_1+57\,b_3\,c_2+65\,b_3\,c_3+73\,b_3\,c_4+196\,b_3\,c_5-35\,a_1-21\,a_2+54\,a_3-3\,b_1-3\,b_2+24\,b_3+50\,c_1+46\,c_2+20\,c_3+29\,c_4+63\,c_5-6.$} 

If we take $B=\{1,a_1,a_2,a_3,b_1,b_2\}$ and $B'=\{1,c_1,c_2,c_3$, $c_4,c_5\}$ we obtain
the following known submatrix of $H_{\Lambda}$:
{ $$
\mat{H}^{B',B}_{\Lambda}=
\left[ \begin {array}{cccccc} -6&-35&-21&54&-3&-3
\\ \noalign{\medskip}50&320&292&-222&122&-160\\ \noalign{\medskip}46&
285&269&-160&119&-163\\ \noalign{\medskip}20&134&138&32&112&-176
\\ \noalign{\medskip}29&188&187&9&140&-214\\ \noalign{\medskip}63&382&
406&-229&108&-117\end {array} \right] 
$$}

\noindent{}which is invertible. Thus, the rank is at least $6$. Let us find if 
$H_{\tilde\Lambda}$ can be extended to a rank $6$ Hankel matrix $H_{\Lambda}$.
If we look at $H^{B'^{+},B^{+}}_{\Lambda}$, several coefficients are
unknown. Yet, as will see, they can be determined by exploiting the
commutation relations, as follows.

The columns $\mat{H}^{B',\{m\}}$ are also known for $m\in\{b_3, a_1\,b_1$, $a_2\,b_1,
a_3\,b_1, a_1\,b_2, a_2\,b_2, a_3\,b_2\}$.
Thus we deduce the relations between these monomials and $B$ by solving the system
$$
\mat{H}^{B',B}_{\Lambda}X = \mat{H}^{B',\{m\}}_{\Lambda}.
$$ 
This yields the
following relations in $\Ac_{\Lambda}$:\\
{$b_3 \equiv -1.-0.02486\,a_1+1.412\,a_2+0.8530\,a_3-0.6116\,b_1+0.3713\,b_2, 
a_1\,b_1 \equiv -2.+6.122\,a_1-3.304\,a_2+.6740\,a_3+.7901\,b_1-1.282\,b_2, 
a_2\,b_1 \equiv -2.+4.298\,a_1-1.546\,a_2+1.364\,a_3+.5392\,b_1-1.655\,b_2, 
a_3\,b_1 \equiv -2.-3.337\,a_1+5.143\,a_2+1.786\,a_3-2.291\,b_1+1.699\,b_2, 
a_1\,b_2 \equiv -2.+0.03867\,a_1-0.1967\,a_2+1.451\,a_3-2.049\,b_1+3.756\,b_2, 
a_2\,b_2 \equiv -2.+3.652\,a_1-3.230\,a_2+.9425\,a_3-2.562\,b_1+4.198\,b_2, 
a_3\,b_2 \equiv -2.+6.243\,a_1-7.808\,a_2-1.452\,a_3+5.980\,b_1+0.03646\,b_2$
}

Using the first relation on $b_{3}$, we can reduce $a_1\,b_3, a_2\,b_3, a_3\,b_3$ and obtain
$3$ linear dependency relations between the monomials in
$B\cup\{a_1^{2},a_{1}a_{2},a_{1}a_{3},a_{2}^{2},a_{2}a_{3},a_{3}^{2}\}$.
Using the commutation relations ${\mathrm{lcm}(m_{1},m_{2})\over
  m_{1}}N(m_{1})-{\mathrm{lcm}(m_{1},m_{2})\over m_{2}}N(m_{2})$,
for $(m_{1},m_{2})\in \{ (a_1\, b_1,a_2\, b_1)$, $(a_1\, b_2,a_2\, b_2), (a_2\,b_2, a_3\,b_2)
\}$ where $N(m_{i})$ is the reduction of $m_{i}$ with respect to the prevision relations,
we obtain $3$ new linear dependency relations between the monomials in
$B\cup\{a_1^{2},a_{1}a_{2},a_{1}a_{3},a_{2}^{2},a_{2}a_{3},a_{3}^{2}\}$.
From these $6$ relations, we deduce the expression of the monomials in
$\{a_1^{2},a_{1}a_{2},a_{1}a_{3}$, $a_{2}^{2},a_{2}a_{3},a_{3}^{2}\}$ as
linear combinations of monomials in $B$:\\
{$a_1^2 \equiv 12.08\,a_1-5.107\,a_2+.2232\,a_3-2.161\,b_1-2.038\,b_2-2., 
a_1\,a_2 \equiv 8.972\,a_1-1.431\,a_2+1.392\,a_3-3.680\,b_1-2.254\,b_2-2., 
a_1\,a_3 \equiv -11.56\,a_1+9.209\,a_2+2.802\,a_3+1.737\,b_1+.8155\,b_2-2., 
a_2^2 \equiv -2.+6.691\,a_1+2.173\,a_2+2.793\,a_3-5.811\,b_1-2.846\,b_2, 
a_2\,a_3 \equiv-2.-11.87\,a_1+9.468\,a_2+2.117\,a_3+3.262\,b_1+0.01989\,b_2, 
a_3^2 \equiv -2.+16.96\,a_1-8.603\,a_2+1.349\,a_3-6.351\,b_1-.3558\,b_2.$}

Now, we are able to compute the matrix of multiplication by $a_{1}$ in $B$,
which is obtained by reducing the monomials 
$B\cdot a_{1}=\{a_{1}, a_1^2, a_1\,a_2, a_1\,a_3, a_1\,b_1, a_1\,b_2\}$ by the computed
relations:
{
$$ 
M_{a_{1}}:=\left[ \begin {array}{cccccc}  0.0&- 2.0&- 2.0&- 2.0&- 2.0&- 2.0
\\ \noalign{\medskip} 1.0& 12.08& 8.972&- 11.56& 6.122& 0.03867
\\ \noalign{\medskip} 0.0&- 5.107&- 1.431& 9.209&- 3.304&- 0.1967
\\ \noalign{\medskip} 0.0& 0.2232& 1.392& 2.802& 0.6740& 1.451
\\ \noalign{\medskip} 0.0&- 2.161&- 3.680& 1.737& 0.7901&- 2.049
\\ \noalign{\medskip} 0.0&- 2.038&- 2.254& 0.8155&- 1.282& 3.756
\end {array} \right].
$$}
The eigenvectors of the transposed operator normalized so that the first
coordinate is $1$ are:
{$$ 
\left[ \begin {array}{c}  1.0\\ \noalign{\medskip} 5.0
\\ \noalign{\medskip} 7.003\\ \noalign{\medskip} 3.0
\\ \noalign{\medskip} 3.0\\ \noalign{\medskip}- 4.0\end {array}
 \right] 
,
 \left[ \begin {array}{c}  1.0\\ \noalign{\medskip} 2.999
\\ \noalign{\medskip} 4.0\\ \noalign{\medskip}- 4.999
\\ \noalign{\medskip}- 2.999\\ \noalign{\medskip} 4.999\end {array}
 \right] 
,
 \left[ \begin {array}{c}  1.0\\ \noalign{\medskip} 2.0
\\ \noalign{\medskip} 2.0\\ \noalign{\medskip} 2.0
\\ \noalign{\medskip} 2.0\\ \noalign{\medskip} 2.0\end {array}
 \right] 
,
 \left[ \begin {array}{c}  1.0\\ \noalign{\medskip} 8.001
\\ \noalign{\medskip} 6.002\\ \noalign{\medskip}- 7.002
\\ \noalign{\medskip} 4.001\\ \noalign{\medskip}- 5.001\end {array}
 \right] 
,
 \left[ \begin {array}{c}  1.0\\ \noalign{\medskip}- 1.0
\\ \noalign{\medskip}- 2.0\\ \noalign{\medskip} 3.0
\\ \noalign{\medskip}- 1.0\\ \noalign{\medskip}- 1.0\end {array}
 \right] 
,
 \left[ \begin {array}{c}  1.0\\ \noalign{\medskip} 0.9999
\\ \noalign{\medskip} 0.9999\\ \noalign{\medskip} 0.9999
\\ \noalign{\medskip} 0.9999\\ \noalign{\medskip} 0.9999\end {array}
 \right] 
$$}
They correspond to the vectors of evaluation of the monomial vector $B$ at
the roots of $I_{\Lambda}$. Thus we known the coordinates
$a_{1},a_{2},a_{3},b_{1},b_{2}$ of these roots. By expanding the polynomial\\
{
$\gamma_{1}\,(1+a_1+a_2+a_3))\,(1+b_1+b_2+b_3)\,(1+\cdots)
+\gamma_{2}\,(1-a_1-2\,a_2+3\,a_3)\,(1-b_1-b_2-b_3)\,(1+\cdots)
+\gamma_{3}\,(1+2\,a_1+2\,a_2+2\,a_3)\,(1+2\,b_1+2\,b_2+3\,b_3)\,(1+\cdots)
+\gamma_{4}\,(1+5\,a_1+7\,a_2+3\,a_3)\,(1+3\,b_1-4\,b_2+8\,b_3)\,(1+\cdots)
+\gamma_{5}\,(1+8\,a_1+6\,a_2-7\,a_3)\,(1+4\,b_1-5\,b_2-3\,b_3)\,(1+\cdots)
+\gamma_{6}\,(1+3\,a_1+4\,a_2-5\,a_3)\,(1-3\,b_1+5\,b_2+4\,b_3)\,(1+\cdots)$}\\
(where the $\cdots$ are terms linear in $c_{i}$)
 and identifying the coefficients of $T$ which do not depend on
 $c_{1},\ldots,c_{5}$, we obtain a linear system in $\gamma_{i}$, which unique solution is 
$(2,-1,-2,3$, $-5,-3)$. This allows us to compute the value $\Lambda$ for any
 monomials in $\{a_{1},a_{2},a_{3},b_{1},b_{2},b_{3}\}$. In particular,
 we can compute the entries of $\mat{H}^{B,B}_{\Lambda}$. Solving the system
$\mat{H}^{B,B}_{\Lambda}\, X=\mat{H}^{B,B'}_{\Lambda},$
we deduce the relations between the monomials in $B'$ and $B$ in
$\Ac_{\Lambda}$ and in particular $c_{1},\ldots, c_{5}$ as
linear combinations of monomials in $B$. This allows us to recover the missing coordinates and
yields the following decomposition:\\
$T:=$ {$2\,(1+a_1+a_2+a_3)\,(1+b_1+b_2+b_3)\,(1+c_1+c_2+c_3+c_4+c_5)
-(1-a_1-2\,a_2+3\,a_3)\,(1-b_1-b_2-b_3)\,(1-c_1-2\,c_2-3\,c_3-4\,c_4+5\,c_5)
-2\,(1+2\,a_1+2\,a_2+2\,a_3)\,(1+2\,b_1+2\,b_2+3\,b_3)\,(1+2\,c_1+2\,c_2+2\,c_3+2\,c_4+2\,c_5)
+3\,(1+5\,a_1+7\,a_2+3\,a_3)\,(1+3\,b_1-4\,b_2+8\,b_3)\,(1+4\,c_1+5\,c_2+6\,c_3+7\,c_4+8\,c_5)
-5\,(1+8\,a_1+6\,a_2-7\,a_3)\,(1+4\,b_1-5\,b_2-3\,b_3)\,(1-6\,c_1-5\,c_2-2\,c_3-3\,c_4-5\,c_5)
-3\,(1+3\,a_1+4\,a_2-5\,a_3)\,(1-3\,b_1+5\,b_2+4\,b_3)\,(1-3\,c_1-2\,c_2+3\,c_3+3\,c_4-7\,c_5).$}

%\bibliographystyle{plain}
%\bibliography{tensors}

\end{document}